\DeclareOldFontCommand{\bf}{\normalfont\bfseries}{\mathbf}
\newtheorem{theorem}{Theorem} 
\newtheorem{proposition}[theorem]{Proposition}
\newtheorem{lemma}[theorem]{Lemma}
\newtheorem{remark}[theorem]{Remark}
\newtheorem{definition}[theorem]{Definition}
\DeclareMathOperator*{\argmin}{arg\,min}
\newcommand{\be}{\begin{equation}}
\newcommand{\ee}{\end{equation}}
\newcommand{\bn}{\begin{enumerate}}
\newcommand{\en}{\end{enumerate}}
\newcommand{\bi}{\begin{itemize}}
\newcommand{\ei}{\end{itemize}}
\newcommand{\R}{{\mathbb R}}
\newcommand{\cP}{{\mathcal P}}
\newcommand{\al}{\alpha}
\newcommand{\bt}{\beta}
\newcommand{\Ga}{\Gamma}
\newcommand{\de}{\delta}
\newcommand{\ep}{\varepsilon}
\newcommand{\ka}{\kappa}
\newcommand{\la}{\lambda}
\newcommand{\vp}{\varphi}
\newcommand{\ot}{\otimes}
\newcommand{\co}{\preceq_c}
\newcommand{\tsp}{\text{supp}}
\newcommand{\PiM}{\Pi_M}
\newcommand{\Proj}{\text{Proj}}
\newcommand{\Pibar}[1]{\Pi^{\text{Bar}}(#1)}
\definecolor{darkspringgreen}{rgb}{0.09, 0.45, 0.27} 
\definecolor{darkgray}{rgb}{0.66, 0.66, 0.66}
\numberwithin{equation}{section}
\numberwithin{theorem}{section}
\title{Remarks on multi-period martingale optimal transport\thanks{The work of JH is completed in partial fulfillment of the requirements for a doctoral degree at the University of Alberta. BP is pleased
		to acknowledge support from Natural Sciences and Engineering Research
		Council of Canada Grants 04658-2018 and and 04864-2024.}}
\author{Brendan Pass\thanks{
		Department of Mathematical and Statistical Sciences, University of Alberta,
		Edmonton, Alberta, Canada pass@ualberta.ca.}, Joshua Hiew\thanks{
		Department of Mathematical and Statistical Sciences, University of Alberta,
		Edmonton, Alberta, Canada pass@ualberta.ca.} }
\date{}
\begin{document}

\maketitle

\begin{abstract}
	We study the structural properties of multi-period martingale optimal transport (MOT).  
We develop new tools to address these problems, and use them to prove several uniqueness and structural results on three-period martingale optimal transport.  More precisely, we establish lemmas on how and when two-period martingale couplings may be glued together to obtain multi-period martingales and which among these glueings are optimal for particular MOT problems.  We use these optimality results to study limits of solutions under convergence of the cost function and obtain a corresponding linearization of the optimal cost.  We go on to establish a complete characterization of limiting solutions in a three-period problem as the interaction between two of the variables vanishes.  Under additional assumptions, we show uniqueness  of the solution and a  structural result which yields the solution essentially explicitly.  For the full three-period problem, we also obtain several structural and uniqueness results under a variety of different  assumptions on the marginals and cost function.

We illustrate our results with a real world application, providing approximate model independent upper and lower bounds for options depending on Amazon stock prices at three different times.  We compare these bounds to prices computed using certain models.
%
    
\end{abstract}

\section{Introduction}

\subsection{Background and Motivation}

Martingale optimal transport (MOT) is an optimization problem with important applications in operations research and financial engineering \cite{BeiglbockHenryPenkner2013, HenryLabordere, NutzStebeggTan20}.  Mathematically, it extends the classical optimal transport problem \cite{santambrogio2015, Villani09, Galichon2016} by adding an additional martingale constraint to the coupling.

The motivation arises from financial engineers' desire to derive model independent bounds for prices of derivatives which are consistent with observed market data.  Consider a derivative whose payoff depends on the price of an asset at several different future times.  The future values of the asset are of course not known, but risk neutral distributions of the prices can be reconstructed from traded prices of vanilla options on that asset at each future time \cite{BreedenLitzenberger}.  These options are heavily traded, and so prices for many of them are typically known; this data can be used to estimate the single time distribution of the asset price. 

On the other hand, the price of the derivative whose payoff depends on prices at several times depends on the dependence structure, or coupling, of these single time distributions, and this cannot typically be determined from available data. The price therefore cannot be pinned down uniquely from market data.  The model free pricing problem is to find the \emph{minimum} (and maximum, via a similar problem, though we focus on the minimum here) possible prices among all multi-period margingale (to conform with the no arbitrage condition of derivative pricing) distributions which have the known single time distributions as its marginals (to be consistent with the data); a more detailed discussion can be found in \cite{HenryLabordere}. The precise mathematical statement of the problem, known as martingale optimal transport, requires a bit of notation and is formulated below \eqref{prob:Primal-multi-period-MOT}.  As a linear program, this problem also has a dual, \eqref{prob:Dual-multi-period-MOT}, which has a complementary financial interpretation in terms of hedging strategies (see the discussion in the following subsection). 

A natural goal is to understand the structure of solutions,  allowing practitioners to quickly and accurately compute\footnote{Ideally this can be done in closed form; more realistically, it can be done numerically exploiting structural features of the solution.} the desired bounds from the available data.  In the simplest case, when only two time periods are involved ($n = 2$ in \eqref{prob:Primal-multi-period-MOT} below), this question has been studied extensively, and a fairly complete understanding of solutions has emerged \cite{bj}.  Real world derivatives, however, often depend on several time periods ($n \geq 2$ in \eqref{prob:Primal-multi-period-MOT}).  This multi-period MOT problem is very delicate.  When the payoff (or cost) function decouples in a particular way, more precisely, when $c(x_1,x_2,...,x_n) =\sum_{i=2}^nc_i(x_1,x_i)$ in \eqref{prob:Primal-multi-period-MOT} and the $c_i$ satisfy certain assumptions, the structure is completely understood \cite{NutzStebeggTan20}. However, these costs are very special, as each variable interacts only with $x_1$. For more general costs, in particular, those for which all pairs of variables interact,  as is realistic in many applications, to the best of our knowledge, nothing is known.  The purpose of this paper is to shed some light on the structure of solutions to these challenging problems, albeit under various simplifying assumptions, and to apply the resulting insights to pricing problems using real-world data.

\subsection{Problem Formulation}
Let $\cP(X)$ denote the set of probability measures on a space $X \subset \R^d$. 
For each $i = 1, \dots, n$, let $\mu_i \in \cP(X_i)$ be a probability measure supported on a compact set $X_i \subset \R^d$,
and define the space $X \coloneqq X_1 \times \dots \times X_{n}$.\footnote{The motivating pricing problem described above corresponds to working in one dimension, $d=1$.  We formulate the problem for a general $d$ here, but will specialized to $d=1$ in some later sections.} 
Assume that $\mu_1, \dots, \mu_{n}$ satisfy the convex order condition, which we will denote by $\mu_i \co \mu_{i+1}$, defined by
\be \label{eqn: convex order}
\int \vp(x) d\mu_i(x) \leq \int \vp(x) d\mu_{i+1}(x), \quad \forall \text{ convex } \varphi : \mathbb{R}^d \to \mathbb{R}.
\ee

The multi-period MOT problem seeks to minimize an intertemporal cost function subject to martingale constraints. 
Let $c: X \to \R$ be a continuous cost function.  For a probability measure $\pi \in \cP(X)$, we denote by $\Proj_{I}(\pi)$ the projection of  $\pi$ onto the coordinates indexed by $I \subset \{1, \dots, n\}$.  The set of couplings is then defined by $\Pi(\mu_1,\mu_2,...,\mu_n): =\{\pi \in \cP(X): \Proj_i(\pi) =\mu_i\}$.  For a given $\pi \in \cP(X)$, we will often consider the disintegration with respect to certain sets of variables $I\subset \{1, \dots, n\}$; setting $\mu_I = \Proj_I(\pi)$ we will write
$$
\pi = \mu_I \otimes \kappa_I^{I^C}
$$
where $\ka_I^{I^C}(\{x_i\}_{i\in I},\{dx_i\}_{i\in I^C})$ is the conditional probability of the variables $\{x_i: i \in I^C\}$ indexed by the complement $I^C$ of $I$, given the variables $x_i$ for $i \in I$.  

The set of martingale couplings of the $\mu_i$ is then defined by

\begin{eqnarray*}
\PiM (\mu_1, \dots, \mu_{n})
:=&\{\pi \in \Pi(\mu_1,\mu_2,...,\mu_n): \int_{X_{i+1} \times ...\times X_n}x_{i+1}\ka_{1,2,...i}^{i+1,...,n}(x_1,x_2,...x_i,dx_{i+1},...,dx_n) =x_i\\
&\forall i =1,2,...n-1, \text{ where }\pi  =\mu_{12,...,i} \otimes \ka_{1,2,...i}^{i+1,...,n}\}
\end{eqnarray*}

By Strassen’s theorem \cite{strassen65}, the convex order condition \eqref{eqn: convex order} ensures that $\PiM(\mu_1, \dots, \mu_{n})$ is non-empty.
The primal MOT problem is given by
\be\label{prob:Primal-multi-period-MOT}
P(\mu_1, \dots, \mu_{n}) = \inf_{\pi \in \PiM (\mu_1, \dots, \mu_{n})} \int c(x_1, \dots, x_{n}) d\pi.
\ee

The dual formulation of multi-period MOT plays a crucial role in financial applications, 
as it provides a natural interpretation in terms of hedging strategies. 
The dual problem is given by
\be\label{prob:Dual-multi-period-MOT}
D(\mu_1, \dots, \mu_{n}) = \sup_{(u_i), (h_i)} \sum_{i=1}^{n} \int u_i(x_i) d\mu_i(x_i),
\ee
where the supremum is taken over functions $(u_i: X_i \to \R \cup \{+\infty\})$ and $(h_i : X_1 \times ...X_i \to \R)$ satisfying 
\[
\sum_{i=1}^{n} u_i(x_i) + \sum_{i=1}^{n-1} h_i(x_1, \dots, x_i) (x_{i+1} - x_i) \leq c(x_1, \dots, x_n).
\]

On the dual side, the MOT problem can be understood as constructing an \textit{executable semi-static trading strategy} that sub-replicates the contingent claim cost function $c$ \cite{HenryLabordere}. 
The functions $u_i(x_i)$ represent the \textit{payoffs} of European options written on the asset prices at each time step,
while $h_i(x_1, \dots, x_i)$ correspond to \textit{predictable processes} that determine dynamic trading strategies. 
The dual value of the problem thus represents the \textit{robust sub-replication price} of the contingent claim.  

\subsection{Our Contributions}
We begin by developing some basic tools to study multi-period martingale optimal transport, including glueing lemmas characterizing when and how certain two period martingales can be glued to obtain multi-period ones (see Lemmas \ref{lem:GluingLemma1} and \ref{lem:GluingLemma2} below), and, as a consequence, some basic results on solutions to problems with certain decoupled cost functions (Propositions \ref{prop:ConsecutiveOptimality} and \ref{prop:SharedInitialOptimality}).  We also establish some preliminary results on limits of optimal plans as cost functions converge in a certain way (Proposition \ref{prop:Convergence}), and derivatives of the total cost under corresponding perturbations (Proposition \ref{prop: derivatives of optimal cost}); in particular, for certain problems, this can be used to find a linear approximation of the model-free price bound around points where it can be computed in essentially closed form (see Remark \ref{rem: linearization} below).

We go on to apply these tools to several three period ($n=3$) problems.  First, we provide a complete characterization of the limit of solutions as the interaction between the first and third time period vanishes, in terms of a novel variant of the martingale optimal transport problem between conditional probabilities (a financial interpretation of this problem is offered as well); see Theorem \ref{thm: optimizers of limiting three period problem}.  For costs with a particular structure, we obtain a further structural result on solutions (Theorem \ref{thm: structure of optimizer in three period limit problem}), which allows for the construction of explicit solutions when appropriate two marginal problems can be solved in closed form (as is the case for a wide variety of cost functions \cite{bj} \cite{HenryLabordereTouzi19}), as well as establish their uniqueness (Theorem \ref{thm: left monotone uniqueness}).  

We then turn to the full three  marginal problem and establish several uniqueness and structural results, under various assumptions on the marginals; see Theorems \ref{discMarUniq}, \ref{thm: uniqueness for triply supported y} and \ref{thm: uniqueness for discrete x and y}.  Though models satisfying the assumptions required in these results are admittedly highly idealized, to the best of our knowledge they represent the first uniqueness and structural results for multi-period MOT problem with cost involving interactions between all pairs of variables.  We hope and expect that they will initiate a line of research leading to more refined results on these problems in the future.

We also develop an application of our theoretical results to real world pricing problems, providing an approximation of the robust price bounds of path-dependent derivatives of the form of sums of pairwise payoffs.  We apply this method to real world data on Amazon stock prices,  finding approximate bounds on the risk-neutral third moment (and hence the skewness) of the sum of prices at different times as well as a basket of straddles.  We compare these results to prices computed using particular modeling assumptions, and verify that the model prices fall within the approximate model independent bounds.




\subsection{Structure of the Paper}
The remainder of this paper is organized as follows. 
In Section 2, we establish some preliminary results for multi-period MOT which we will use later on,
including martingale gluing lemmas and a cost perturbation analysis. 
Section 3 applies these tools to characterize the limiting solution to a three period problem as the interaction cost between the first and last variable vanishes.
In Section 4, we present new structural results for three period MOT, 
including uniqueness theorems for optimal couplings under different assumptions on the cost function and marginals. 
Applications to derivative pricing problems using real world data are presented in Section 5.

\section{Preliminary definitions and results}
This section develops certain preliminary results we will need later on.  

\subsection{Gluing lemmas and optimality consequences}\label{sec:GluingLemmas}

This section presents two \textit{martingale gluing lemmas}, 
which establish conditions under which a sequence of two-period couplings can be combined into a valid multi-period martingale transport plan.


The following variant of the martingale condition will arise naturally below.  Given a mapping $F: X_i \rightarrow \mathbb{R}^d$, we define 

\[
\Pibar{F,\mu_i,\mu_j} := \Big\{ \pi =\mu_i\otimes \ka^j_i \in \Pi(\mu_i,\mu_j) \, : \, 
\int x_j \, d \ka^j_i (x_j)= F(x_i)\quad \text{ for }\mu_i \text{ a.e. }x_i \in  X_i\Big\}.
\]
Note that  if $F$ is the identity mapping, $F(x) =x$, $\Pibar{F,\mu_i,\mu_j} =\PiM(\mu_i,\mu_j)$.  The case where $F$ is a constant mapping will also play an important role in what follows.


\begin{lemma}\label{lem:GluingLemma1}
    \textbf{(Martingale gluing lemma I)}  
    Let $\mu_1 \co \mu_2 \co \mu_3$ be probability measures in convex order. 
	Suppose that $\pi^{12}=\mu_2\otimes \kappa ^1_2 \in \Pi_M(\mu_1, \mu_2)$ and $\pi^{23}=\mu_2\otimes \kappa^3_2  \in \Pi_M(\mu_2, \mu_3)$ are two martingale couplings. 

    Then the set of martingale couplings $\pi^{123} \in \Pi_M(\mu_1, \mu_2, \mu_3)$  such that
    \[
    \Proj_{12}(\pi^{123}) = \pi^{12}, \quad \Proj_{23}(\pi^{123}) = \pi^{23}.
    \]
    is given by
    $$
    \{\pi^{123} \in \Pi_M(\mu_1, \mu_2, \mu_3): \pi^{123}=\mu_2\otimes \ka_2^{13}, \ka_2^{13}(x_2,dx_1,dx_2) \in \Pibar{F_{x_2}, \ka_2^1,\ka_2^3} \text{ for } \mu_2 \text{ a.e. } x_2\}
    $$
    where, for each fixed $x_2$, $F_{x_2}:X_1 \rightarrow X_3$ is the constant function, $F_{x_2}(x_1) =x_2$.
\end{lemma}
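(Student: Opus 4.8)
The plan is to prove the two inclusions by translating, in both directions, between the disintegration of a three-period coupling $\pi^{123}$ with respect to its middle marginal $\mu_2$ and its disintegration in the order used to define $\Pi_M$. I would rely throughout on existence and essential uniqueness of disintegrations (regular conditional probabilities) on the compact metric spaces $X_i$, together with their associativity. The guiding observation is that, writing $\pi^{123}=\mu_2\otimes\ka_2^{13}$: (i) requiring $\Proj_{12}(\pi^{123})=\pi^{12}$ and $\Proj_{23}(\pi^{123})=\pi^{23}$ is equivalent, fiberwise, to $\ka_2^{13}(x_2,\cdot,\cdot)\in\Pi(\ka_2^1(x_2,\cdot),\ka_2^3(x_2,\cdot))$ for $\mu_2$-a.e.\ $x_2$; and (ii) the second martingale identity, $\int_{X_3}x_3\,\ka_{12}^3(x_1,x_2,dx_3)=x_2$, is equivalent, fiberwise, to the statement that the conditional barycenter of $x_3$ given $x_1$ under $\ka_2^{13}(x_2,\cdot,\cdot)$ is the constant $x_2$ --- that is, $\ka_2^{13}(x_2,\cdot,\cdot)\in\Pibar{F_{x_2},\ka_2^1,\ka_2^3}$.

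\textbf{The inclusion $\subseteq$.} Starting from $\pi^{123}\in\Pi_M(\mu_1,\mu_2,\mu_3)$ with the prescribed projections, I would first disintegrate $\pi^{123}=\mu_2\otimes\ka_2^{13}$. Projecting onto the $x_1$- (resp.\ $x_3$-) coordinate and comparing with $\pi^{12}=\mu_2\otimes\ka_2^1$ (resp.\ $\pi^{23}=\mu_2\otimes\ka_2^3$), uniqueness of disintegrations gives that the marginals of $\ka_2^{13}(x_2,\cdot,\cdot)$ are $\ka_2^1(x_2,\cdot)$ and $\ka_2^3(x_2,\cdot)$ for $\mu_2$-a.e.\ $x_2$. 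I would then further disintegrate $\ka_2^{13}(x_2,dx_1,dx_3)=\ka_2^1(x_2,dx_1)\,\eta_{x_2}(x_1,dx_3)$; by associativity of disintegration, the resulting kernel $\eta_{x_2}(x_1,\cdot)$ must agree with $\ka_{12}^3(x_1,x_2,\cdot)$ for $\pi^{12}$-a.e.\ $(x_1,x_2)$. The $i=2$ martingale constraint on $\pi^{123}$ then becomes $\int_{X_3}x_3\,\eta_{x_2}(x_1,dx_3)=x_2$ for $\mu_2$-a.e.\ $x_2$ and $\ka_2^1(x_2,\cdot)$-a.e.\ $x_1$, which is exactly $\ka_2^{13}(x_2,\cdot,\cdot)\in\Pibar{F_{x_2},\ka_2^1,\ka_2^3}$.

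\textbf{The inclusion $\supseteq$.} Conversely, given a measurable family $\ka_2^{13}$ with $\ka_2^{13}(x_2,\cdot,\cdot)\in\Pibar{F_{x_2},\ka_2^1,\ka_2^3}$ for $\mu_2$-a.e.\ $x_2$, I would set $\pi^{123}:=\mu_2\otimes\ka_2^{13}$ and verify the defining conditions of $\Pi_M(\mu_1,\mu_2,\mu_3)$ and the projection constraints. The marginal conditions $\ka_2^{13}(x_2,\cdot,\cdot)\in\Pi(\ka_2^1(x_2,\cdot),\ka_2^3(x_2,\cdot))$ immediately give $\Proj_{12}(\pi^{123})=\pi^{12}$ and $\Proj_{23}(\pi^{123})=\pi^{23}$, and in particular $\pi^{123}\in\Pi(\mu_1,\mu_2,\mu_3)$. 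The $i=1$ martingale identity depends only on the conditional law of $x_2$ given $x_1$, hence only on $\Proj_{12}(\pi^{123})=\pi^{12}\in\Pi_M(\mu_1,\mu_2)$, so it holds automatically. For the $i=2$ identity I would run the disintegration argument of the previous paragraph in reverse: disintegrating $\ka_2^{13}(x_2,dx_1,dx_3)=\ka_2^1(x_2,dx_1)\,\eta_{x_2}(x_1,dx_3)$, the barycenter condition built into $\Pibar{F_{x_2},\ka_2^1,\ka_2^3}$ reads $\int x_3\,\eta_{x_2}(x_1,dx_3)=x_2$, and reassembling identifies $\eta_{x_2}(x_1,\cdot)$ with $\ka_{12}^3(x_1,x_2,\cdot)$, giving $\int x_3\,\ka_{12}^3(x_1,x_2,dx_3)=x_2$ $\pi^{12}$-a.e., as required.

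\textbf{Main difficulty.} Conceptually the lemma is a bookkeeping statement, and the only real care is measure-theoretic. The step I expect to require the most attention is the associativity (tower) identity linking the two-stage conditioning ``first $x_1$ given $x_2$, then $x_3$ given $(x_1,x_2)$'' inside each fiber to the single-stage disintegration of $\pi^{12}$ with respect to $(x_1,x_2)$, together with the need to keep the various ``almost everywhere'' qualifiers consistent, passing between statements valid for $\mu_2$-a.e.\ $x_2$ and $\ka_2^1(x_2,\cdot)$-a.e.\ $x_1$ and statements valid $\pi^{12}$-a.e.; this is routine via Fubini's theorem but must be arranged so that exceptional null sets do not proliferate.
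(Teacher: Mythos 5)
Your proposal is correct and follows essentially the same route as the paper: disintegrate $\pi^{123}$ first with respect to $x_2$ and then with respect to $x_1$ inside each fiber, use associativity of disintegration to identify the fiberwise kernel with $\ka_{12}^3$, and observe that the second martingale identity becomes exactly the barycenter condition defining $\Pibar{F_{x_2},\ka_2^1,\ka_2^3}$. The paper states this in one sentence; your version simply spells out the two inclusions, the marginal identifications, and the a.e.\ bookkeeping that the paper leaves implicit.
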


\begin{proof}
Since disintegrating $\pi^{123}=\mu_2\otimes \ka_2^{13}$ with respect to $x_1$ and $x_2$ is equivalent to disintegrating $\ka_2^{13} = \ka_2^1 \otimes \ka_{21}^{3}$ with respect to $x_1$, we see that the martingale conditions is equivalent to $\int x_3\ka_{21}^{3}(x_1,x_2,dx_3)=x_2$, which is exactly the condition characterizing $\Pibar{F_{x_2}, \ka_2^1,\ka_2^3}$.
\end{proof}
\begin{remark}\label{rem: markovian glueing}
The set of such couplings is always non-empty, since we can take $ \kappa^{13}_2 = \kappa^{1}_2 \kappa^{3}_2  $ to be product measure.  This is in fact the only glueing which is also Markovian.
\end{remark}

Using Lemma \ref{lem:GluingLemma1} successively, one gets a characterization of the ways to glue $n-1$ pairs of martingale $2$ period couplings $\pi^{i,i+1} \in \PiM(\mu_i, \mu_{i+1})$, $i=1,2..,n-1$ to obtain an $n$-period martingale $\pi^{12..n} \in \PiM(\mu_1,... \mu_{n})$. The following result asserts that, when the $\pi^{i,i+1}$ are all optimal for two-period problems,  each such martingale is optimal in the $n$-period MOT problem for an appropriate cost function.


\begin{proposition}
    \label{prop:ConsecutiveOptimality}
    Let $\mu_1 \co \dots \co \mu_{n}$ be probability measures in convex order. 
	For each $i = 1, \dots, n-1$, 
	let be an optimal martingale coupling $\pi^*_{i, i+1}\in \Pi_M(\mu_i, \mu_{i+1})$ for the two-period MOT problem with continuous cost function $c_i(x_i, x_{i+1})$. 
    Then, any coupling $\pi^* \in \Pi_M(\mu_1, \dots, \mu_{n})$ constructed by successively applying Lemma \ref{lem:GluingLemma1} is optimal for the multi-period MOT problem with cost
    \[
    c(x_1, \dots, x_{n}) = \sum_{i=1}^{n-1} c_i(x_i, x_{i+1}).
    \]

Conversely, if $\pi^{12,..n} \in \PiM(\mu_1,... \mu_{n})$ is optimal for the multi-period MOT problem with this cost, each twofold projection $Proj_{i,i+1}(\pi^{12,..n})$ is optimal for the corresponding $2$-period MOT problem.
    
\end{proposition}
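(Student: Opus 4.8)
The plan is to exploit the decoupled structure of the cost so that the $n$-period problem factors into the $n-1$ two-period problems it is built from. The first step is the observation that projection onto a consecutive pair of coordinates sends $n$-period martingale couplings to two-period ones: if $\pi \in \Pi_M(\mu_1,\dots,\mu_n)$, then the defining condition $\E_\pi[x_{i+1}\mid x_1,\dots,x_i]=x_i$ together with the tower property gives $\E_\pi[x_{i+1}\mid x_i]=x_i$, and since $\Proj_i(\pi)=\mu_i$ for all $i$ we get $\Proj_{i,i+1}(\pi)\in\Pi_M(\mu_i,\mu_{i+1})$. (Every set here is non-empty since convex order is transitive, so $\mu_i\co\mu_{i+1}$.) The second step is that, because $c=\sum_{i=1}^{n-1}c_i(x_i,x_{i+1})$ depends on $\pi$ only through its consecutive two-fold projections, for every $\pi\in\Pi_M(\mu_1,\dots,\mu_n)$ one has
\[
\int c\, d\pi \;=\; \sum_{i=1}^{n-1}\int c_i(x_i,x_{i+1})\, d\,\Proj_{i,i+1}(\pi)\;\ge\;\sum_{i=1}^{n-1} P(\mu_i,\mu_{i+1}),
\]
where $P(\mu_i,\mu_{i+1})=\int c_i\, d\pi^*_{i,i+1}$ is the optimal value of the $i$-th two-period problem; all integrals are finite since the $X_i$ are compact and the $c_i$ continuous. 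This already gives $P(\mu_1,\dots,\mu_n)\ge\sum_{i=1}^{n-1}P(\mu_i,\mu_{i+1})$.

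For the reverse inequality and the first assertion, I would use that any $\pi^*$ produced by successively applying Lemma \ref{lem:GluingLemma1} to the couplings $\pi^*_{i,i+1}$ satisfies $\Proj_{i,i+1}(\pi^*)=\pi^*_{i,i+1}$ for every $i$ — this agreement of consecutive projections is precisely what each gluing step records. Substituting into the displayed identity yields $\int c\, d\pi^* = \sum_{i=1}^{n-1}P(\mu_i,\mu_{i+1})$, which matches the lower bound; hence $\pi^*$ is optimal and, as a by-product, $P(\mu_1,\dots,\mu_n)=\sum_{i=1}^{n-1}P(\mu_i,\mu_{i+1})$. For the converse, let $\pi=\pi^{12\dots n}$ be any optimizer of the $n$-period problem. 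Then $\int c\, d\pi$ equals this common value $\sum_i P(\mu_i,\mu_{i+1})$, while the displayed decomposition exhibits $\int c\, d\pi$ as a sum of $n-1$ terms, each of which is $\ge P(\mu_i,\mu_{i+1})$. A sum of quantities each at least its own lower bound that equals the sum of those bounds must achieve equality term by term, so $\int c_i\, d\,\Proj_{i,i+1}(\pi)=P(\mu_i,\mu_{i+1})$ for every $i$, i.e. each consecutive projection is optimal for its two-period MOT problem.

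The one genuinely delicate point is the bookkeeping in the forward direction: one must check that ``successively applying Lemma \ref{lem:GluingLemma1}'' preserves \emph{all} consecutive two-fold marginals simultaneously, not merely the two involved in the most recent gluing. This is an easy induction — when a new marginal $\pi^*_{k,k+1}$ is glued onto a $k$-period martingale whose consecutive projections already coincide with $\pi^*_{1,2},\dots,\pi^*_{k-1,k}$, the lemma constrains only the projections onto $\{k-1,k\}$ and $\{k,k+1\}$ and leaves the earlier ones intact — but it deserves to be spelled out. Beyond that, everything reduces to the tower property and the elementary remark about equality in a sum of inequalities.
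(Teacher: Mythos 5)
Your proof is correct and follows essentially the same route as the paper: the paper's entire argument is the identity $\int c\,d\pi=\sum_{i=1}^{n-1}\int c_i\,d\,\Proj_{i,i+1}(\pi)$ for any $\pi\in\Pi_M(\mu_1,\dots,\mu_n)$, from which both directions follow exactly as you describe. You simply spell out the details the paper leaves implicit (the tower-property check that consecutive projections are two-period martingale couplings, the equality-in-a-sum-of-inequalities step for the converse, and the bookkeeping that successive gluing preserves all consecutive projections), and you correctly use the upper summation limit $n-1$ where the paper's display has a typo.
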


\begin{proof}


    The result follows easily by noting that for any $\pi \in \PiM(\mu_1,...,\mu_n)$, we have
    $$
\int    \sum_{i=1}^{n} c_i(x_i, x_{i+1})d\pi =\sum_{i=1}^{n} \int c_i(x_i, x_{i+1})d(\Proj_{i,i+1}(\pi))
    $$
\end{proof}



We now turn to a second gluing problem, 
where instead of working with adjacent marginals $(\mu_1, \mu_2, \mu_3)$, 
we consider two couplings $\pi^{12} \in \Pi_M(\mu_1, \mu_2)$ and $\pi^{13} \in \Pi_M(\mu_1, \mu_3)$. 





\begin{lemma}\label{lem:GluingLemma2}
    \textbf{(Martingale gluing lemma II)}  
    Let $\mu_1 \co \mu_2 \co \mu_3$ be probability measures in convex order. 
	Suppose that $\pi^{12} = \mu_1 \otimes \kappa_1^2 \in \Pi_M(\mu_1, \mu_2)$ and $\pi^{13} = \mu_1 \otimes \kappa_1^3 \in  \Pi_M(\mu_1, \mu_3)$ are two martingale couplings such that for $\mu_1$ almost every $x_1$ we have $\kappa_1^2(x_1,dx_2) \co \ka_1^3(x_1,dx_3)$
    Then, there exists a martingale coupling $\pi^{123} \in \Pi_M(\mu_1, \mu_2, \mu_3)$ such that
    \[
    \Proj_{12}(\pi^{123}) = \pi^{12}, \quad \Proj_{13}(\pi^{123}) = \pi^{13}.
    \]
\end{lemma}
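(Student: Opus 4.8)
The plan is to build $\pi^{123}$ by a fibered application of Strassen's theorem along the first coordinate. First I would disintegrate the two given couplings with respect to $x_1$, writing $\pi^{12}=\mu_1\otimes\kappa_1^2$ and $\pi^{13}=\mu_1\otimes\kappa_1^3$ as in the statement. For $\mu_1$-a.e.\ $x_1$ the hypothesis gives $\kappa_1^2(x_1,\cdot)\co\kappa_1^3(x_1,\cdot)$, so Strassen's theorem \cite{strassen65} furnishes a (generally non-unique) martingale coupling $\gamma_{x_1}\in\Pi_M\big(\kappa_1^2(x_1,\cdot),\kappa_1^3(x_1,\cdot)\big)$. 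I would then define $\pi^{123}:=\mu_1\otimes\gamma_{x_1}$, i.e.
\[
\int f\,d\pi^{123}=\int_{X_1}\int_{X_2\times X_3}f(x_1,x_2,x_3)\,\gamma_{x_1}(dx_2,dx_3)\,\mu_1(dx_1),
\]
and check that it has the required projections and martingale property.

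The step I expect to be the main obstacle is that the family $x_1\mapsto\gamma_{x_1}$ must depend measurably on $x_1$ for $\pi^{123}$ to be a well-defined Borel measure, since Strassen's theorem is purely an existence statement. I would handle this by a measurable selection argument: on the Borel set $\Theta:=\{(m,m')\in\cP(X_2)\times\cP(X_3):m\co m'\}$ consider the correspondence $(m,m')\mapsto\Pi_M(m,m')\subset\cP(X_2\times X_3)$. It has non-empty values (Strassen), closed (hence compact, as $X_2,X_3$ are compact) values, and Borel graph, because the marginal constraints together with the countably many martingale test conditions $\int(x_3-x_2)\psi(x_2)\,d\gamma=0$ (for $\psi$ in a countable dense family) are closed conditions depending measurably on $(m,m')$. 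A measurable selection theorem (Kuratowski--Ryll-Nardzewski, or Jankov--von Neumann in the graph form) then provides a Borel map $s:\Theta\to\cP(X_2\times X_3)$ with $s(m,m')\in\Pi_M(m,m')$; composing with the measurable disintegration map $x_1\mapsto\big(\kappa_1^2(x_1,\cdot),\kappa_1^3(x_1,\cdot)\big)$ yields the desired measurable family $\gamma_{x_1}:=s\big(\kappa_1^2(x_1,\cdot),\kappa_1^3(x_1,\cdot)\big)$. (Alternatively one may invoke a measurable version of Strassen's theorem directly.)

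Granting the measurable family, the remaining verification is bookkeeping with disintegrations. Since $\gamma_{x_1}$ couples $\kappa_1^2(x_1,\cdot)$ with $\kappa_1^3(x_1,\cdot)$, its $X_2$- and $X_3$-marginals are precisely those measures, so $\Proj_{12}(\pi^{123})=\mu_1\otimes\kappa_1^2=\pi^{12}$ and $\Proj_{13}(\pi^{123})=\mu_1\otimes\kappa_1^3=\pi^{13}$; in particular $\pi^{123}\in\Pi(\mu_1,\mu_2,\mu_3)$. Writing $\pi^{123}=\mu_1\otimes\kappa_1^{23}$ with $\kappa_1^{23}(x_1,\cdot)=\gamma_{x_1}$, the $i=1$ martingale condition $\int x_2\,\kappa_1^{23}(x_1,dx_2,dx_3)=x_1$ holds because this integral equals $\int x_2\,\kappa_1^2(x_1,dx_2)=x_1$, using $\pi^{12}\in\Pi_M(\mu_1,\mu_2)$; and disintegrating once more as $\gamma_{x_1}=\kappa_1^2(x_1,\cdot)\otimes\kappa_{12}^3$, the $i=2$ condition $\int x_3\,\kappa_{12}^3(x_1,x_2,dx_3)=x_2$ is exactly the martingale property of $\gamma_{x_1}\in\Pi_M\big(\kappa_1^2(x_1,\cdot),\kappa_1^3(x_1,\cdot)\big)$. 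Hence $\pi^{123}\in\Pi_M(\mu_1,\mu_2,\mu_3)$ with the prescribed bivariate projections. (The remaining relation $\int x_3\,\kappa_1^3(x_1,dx_3)=x_1$ is then automatic by the tower property, consistent with $\pi^{13}$ being a martingale coupling.)
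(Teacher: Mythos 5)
Your proposal is correct and follows essentially the same route as the paper: disintegrate along $x_1$, apply Strassen fiberwise to the conditional measures (which are in convex order by hypothesis), and glue the resulting kernels back against $\mu_1$; the paper handles the measurability of the fiberwise selection by citing Theorem 1.3 of Leskel\"a--Vihola, which is exactly the ``measurable version of Strassen'' you mention as an alternative to your Kuratowski--Ryll-Nardzewski selection argument. Your explicit verification of the projection and martingale conditions is a more detailed version of what the paper dismisses as ``by construction.''
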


\begin{proof}
    By Theorem 1.3 in \cite{LeskelaVihola17}, 
	there exists a kernel $\kappa_1^{23}(x_1, dx_2, dx_3)$ that is a martingale coupling between $\kappa_1^2(x_1, dx_2)$ and $\kappa_1^3(x_1, dx_3)$ for $\mu_1$ a.e. $x_1$ (alternatively, we may obtain this by applying Strassen's theorem pointwise). 

    We then construct the joint law $\pi^{123}$ as:
    \[
    \pi^{123}(dx_1,dx_2,dx_3) = \mu_1(dx_1) \otimes \kappa_1^{23}(x_1, dx_2, dx_3).
    \]
    By construction, $\pi^{123}$ is a martingale coupling between $\mu_1, \mu_2, \mu_3$ and satisfies the projection constraints.
\end{proof}


  As above, given $n-1$ martingale couplings $\pi^{1i} =\mu_1\otimes \ka^i_1$ with conditional probabilities in convex order $\ka^i_1 \co \ka^{i+1}_1$ $\mu_1$ a.e., we can successively apply Lemma \ref{lem:GluingLemma2} to construct a martingale coupling $\pi^{12...n} \in \PiM(\mu_1,...,\mu_n)$ such that $Proj_{1i}(\pi^{12...n} ) =\pi^{1i}$.  The result below asserts optimality of these couplings for certain MOT problems.


\begin{proposition}\label{prop:SharedInitialOptimality}
    Let $\mu_1 \co \dots \co \mu_{n+1}$ be probability measures in convex order. 
	Suppose that 
	there exists martingale coupling $\pi^*_{1,i} =\mu_1 \otimes \ka_1^i \in \Pi_M(\mu_1, \mu_i)$ for each $i = 2, \dots, n+1$ which are  optimal for the two-period problems with continuous costs $c_i(x_1, x_i)$ such that $\ka^i_1 \co \ka^{i+1}_1$ $\mu_1$ a.e.
	
    Then, any coupling $\pi^* \in \Pi_M(\mu_1, \dots, \mu_{n+1})$ constructed using succesive iterations of Lemma \ref{lem:GluingLemma2} is optimal for the multi-period MOT problem with cost
    \be\label{eqn: shared initial cost}
    c(x_1, \dots, x_{n}) = \sum_{i=2}^{n} c_i(x_1, x_i).
    \ee

    Conversely, if $\pi^*$ is optimal for the multi-period MOT problem, each of its projections $Proj_{1i}(\pi^{12...n} ) $ is optimal for the corresponding two period problem.
\end{proposition}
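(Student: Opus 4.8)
The plan is to follow the proof of Proposition \ref{prop:ConsecutiveOptimality}, exploiting the fact that the cost \eqref{eqn: shared initial cost} depends on a coupling $\pi$ only through its bivariate projections $\Proj_{1i}(\pi)$. Concretely, for any $\pi \in \PiM(\mu_1,\dots,\mu_{n+1})$ one has
\[
\int \sum_{i} c_i(x_1,x_i)\, d\pi \;=\; \sum_{i} \int c_i(x_1,x_i)\, d\big(\Proj_{1i}(\pi)\big),
\]
the sum ranging as in \eqref{eqn: shared initial cost}. The first thing I would verify is that each $\Proj_{1i}(\pi)$ actually belongs to $\PiM(\mu_1,\mu_i)$: it obviously has marginals $\mu_1$ and $\mu_i$, and the martingale identity $\E_\pi[x_i \mid x_1] = x_1$ follows from the multi-period martingale conditions $\E_\pi[x_{j+1}\mid x_1,\dots,x_j] = x_j$ by applying the tower property of conditional expectation, iterating down from $j = i-1$ to $j = 1$.

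Given this, each term satisfies $\int c_i\, d(\Proj_{1i}(\pi)) \ge P(\mu_1,\mu_i) = \int c_i\, d\pi^*_{1,i}$, and summing over $i$ yields the lower bound $P(\mu_1,\dots,\mu_{n+1}) \ge \sum_i \int c_i\, d\pi^*_{1,i}$. For the matching upper bound (and hence the first assertion of the proposition), I would invoke the construction preceding the statement: since $\kappa_1^i \co \kappa_1^{i+1}$ holds $\mu_1$-a.e., successive applications of Lemma \ref{lem:GluingLemma2} produce a coupling $\pi^* \in \PiM(\mu_1,\dots,\mu_{n+1})$ with $\Proj_{1i}(\pi^*) = \pi^*_{1,i}$ for every $i$. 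Substituting $\pi^*$ into the identity above gives $\int \sum_i c_i\, d\pi^* = \sum_i \int c_i\, d\pi^*_{1,i}$, so $\pi^*$ attains the lower bound and is therefore optimal; in particular this shows $P(\mu_1,\dots,\mu_{n+1}) = \sum_i P(\mu_1,\mu_i)$.

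For the converse, suppose $\pi$ is optimal for the multi-period problem. Then, using the identity once more,
\[
\sum_i \int c_i\, d\big(\Proj_{1i}(\pi)\big) \;=\; P(\mu_1,\dots,\mu_{n+1}) \;=\; \sum_i P(\mu_1,\mu_i),
\]
while termwise $\int c_i\, d(\Proj_{1i}(\pi)) \ge P(\mu_1,\mu_i)$. A sum of nonnegative excesses vanishing forces each excess to vanish, so each $\Proj_{1i}(\pi)$ is optimal for the two-period MOT problem with cost $c_i$.

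The argument is essentially routine once Lemma \ref{lem:GluingLemma2} and its iteration are in hand; the two points deserving genuine care are the verification that the $(1,i)$-projection of a multi-period martingale is again a martingale coupling (the tower-property step), and the bookkeeping ensuring that the iterated gluing preserves \emph{all} of the prescribed projections $\Proj_{1i}(\pi^*) = \pi^*_{1,i}$, not merely those of consecutive indices — but the latter is exactly what the discussion immediately preceding the proposition records, so no new work is needed there.
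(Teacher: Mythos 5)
Your proposal is correct and follows essentially the same route as the paper: the key identity $\int \sum_i c_i(x_1,x_i)\,d\pi = \sum_i \int c_i\,d(\Proj_{1i}(\pi))$ is exactly what the authors' proof rests on, with the gluing construction supplying a coupling whose $(1,i)$-projections are the two-period optimizers. Your explicit tower-property check that each $\Proj_{1i}(\pi)$ lies in $\Pi_M(\mu_1,\mu_i)$ is a detail the paper leaves implicit, but it is the right verification and introduces no new ideas.
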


\begin{proof}
The proof is similar to the proof of Proposition \ref{prop:ConsecutiveOptimality}; it follows immediately after noting that for any $\pi \in \PiM(\mu_1,...,\mu_n)$, we have
    $$
\int    \sum_{i=1}^{n} c_i(x_1, x_{i})d\pi =\sum_{i=1}^{n} \int c_i(x_1, x_{i})d(\Proj_{1,i}(\pi))
    $$


\end{proof}
\begin{remark}
    When $d=1$, under additional assumptions on the cost functions $c_i$, optimizers for the MOT problem with cost \eqref{eqn: shared initial cost} are completely characterized in \cite{NutzStebeggTan20}.  The preceding proposition provides only a partial characterization, since the construction  requires the optimal twofold marginals $\pi_{1,i}^*$, and requires the strong conditional convex order condition.  However, it also applies in higher dimensions $d \geq 1$ and does not require any structural assumptions on the $c_i$.
\end{remark}
\subsection{Limiting behaviour for converging cost  functions}\label{sec:Convergence}

In this subsection, we study the limit of optimal martingale couplings in the limit as a perturbation of the cost function vanishes.
Suppose that $c, p \in C(X)$ are continuous cost functions, 
and consider the perturbed cost family:
\[
c_\ep(x) = c(x) + \ep p(x).
\]

Let $\Pi_{\ep}^M(\mu_1,...,\mu_n) :=\argmin_{\pi \in \Pi^M(\mu_1,...,\mu_n)} \int c_\ep \, d\pi$ be the set of optimal measures for the cost $c_{\ep}$
and consider a  sequence $\ep_k \to 0$ with $\ep_k > 0$.

\begin{proposition}\label{prop:Convergence}
    For each $k$, let $\pi_k \in \Pi_{\ep_k}^M(\mu_1,...,\mu_n)$
    Any weak limit (after relabeling if necessary) $\pi_0 = \lim \pi_k$ belongs to $\Pi_{0}^M(\mu_1,...,\mu_n)$ and minimizes the cost function:
    \[
    \inf_{\pi \in \Pi_{0}^M(\mu_1,...,\mu_n)} \int p \, d\pi.
    \]
\end{proposition}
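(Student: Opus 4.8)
The plan is to use a standard $\Gamma$-convergence / stability argument for constrained minimization problems. First I would note that $\Pi^M(\mu_1,\dots,\mu_n)$ is a fixed set, independent of $\ep$, and that by compactness of the $X_i$ together with the usual Prokhorov argument this set is weakly compact; hence the limit $\pi_0$ exists along a subsequence and, since the marginal and martingale constraints are closed under weak convergence (the martingale condition can be written as $\int [h_i(x_1,\dots,x_i)(x_{i+1}-x_i)]\,d\pi = 0$ for all continuous bounded $h_i$, which passes to the limit because the $X_i$ are bounded), we get $\pi_0 \in \Pi^M(\mu_1,\dots,\mu_n) = \Pi_0^M(\mu_1,\dots,\mu_n)$ as a set — so membership is the easy part.

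Next I would establish that $\pi_0$ is optimal for the unperturbed cost $c$. Fix any $\pi \in \Pi^M(\mu_1,\dots,\mu_n)$. By optimality of $\pi_k$ for $c_{\ep_k}$,
\[
\int c\,d\pi_k + \ep_k \int p\,d\pi_k \;\le\; \int c\,d\pi + \ep_k \int p\,d\pi.
\]
Since $c,p$ are continuous and bounded on the compact set $X$, the terms $\int p\,d\pi_k$ are uniformly bounded and $\int c\,d\pi_k \to \int c\,d\pi_0$; letting $k\to\infty$ gives $\int c\,d\pi_0 \le \int c\,d\pi$, so $\pi_0 \in \Pi_0^M(\mu_1,\dots,\mu_n)$ is a minimizer of $c$.

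Finally, to get the selection property, I would re-examine the same inequality restricted to competitors $\pi$ that are themselves $c$-optimal. For such $\pi$ we have $\int c\,d\pi = P(\mu_1,\dots,\mu_n) \le \int c\,d\pi_k$, so the displayed inequality rearranges to
\[
\ep_k \int p\,d\pi_k \;\le\; \big(\textstyle\int c\,d\pi - \int c\,d\pi_k\big) + \ep_k \int p\,d\pi \;\le\; \ep_k \int p\,d\pi.
\]
Dividing by $\ep_k>0$ and passing to the limit (using $\int p\,d\pi_k \to \int p\,d\pi_0$) yields $\int p\,d\pi_0 \le \int p\,d\pi$ for every $c$-optimal $\pi$, i.e. $\pi_0$ minimizes $\int p\,d\pi$ over $\Pi_0^M(\mu_1,\dots,\mu_n)$, as claimed. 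The one point demanding a little care — the main "obstacle," though it is mild here — is verifying that the martingale constraint is preserved under weak limits; this is where compactness of the $X_i$ (hence boundedness of $x_{i+1}-x_i$ and of the test functions $h_i$ that can be taken continuous and bounded) is used, and it is the only place the hypotheses on the supports genuinely enter.
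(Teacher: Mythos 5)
Your proof is correct and follows essentially the same route as the paper's: the same optimality inequality for $\pi_k$ is passed to the limit to show $\pi_0$ minimizes $c$, and then the same rearrangement against $c$-optimal competitors, division by $\ep_k$, and limit gives the selection property. The only difference is that you spell out the weak closedness of the martingale constraint, which the paper delegates to a remark citing the compactness result of Beiglb\"ock--Henry-Labord\`ere--Penkner.
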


\begin{remark}
    Since the set of martingale couplings is compact in the weak topology \cite[Proposition 4.4]{BeiglbockHenryPenkner2013}, 
	every sequence $\{\pi_k\}$ has a convergent subsequence.
\end{remark} 

\begin{proof}
By optimality of $\pi_k$, for any $\pi \in \Pi^M(\mu_1,...,\mu_n)$,
\[
\int (c + \ep_k p) \, d\pi_k \leq \int (c + \ep_k p) \, d\pi.
\]
Taking the limit as $k \to \infty$ and using the boundedness and continuity of $c$ and $p$, we obtain
\[
\int c \, d\pi_0 \leq \int c \, d\pi.
\]
Thus, $\pi_0 \in \Pi_{0}^M(\mu_1,...,\mu_n)$.

For any other $\pi \in \Pi_{0}^M(\mu_1,...,\mu_n)$, we again use optimality of $\pi_k$:
\be\label{eq:optimality}
    \int (c + \ep_k  p) \, d\pi_k 
    \leq \int (c + \ep_k p) \, d\pi 
\ee
Since $\pi \in \Pi_{0}^M(\mu_1,...,\mu_n)$, we have $\int c \, d\pi_k \geq \int c \, d\pi$.
Combining this with (\ref{eq:optimality}) gives $\int\ep_k p \, d\pi_k \leq \int \ep_k p \, d\pi$.
Since $\ep_k  > 0$, we divide by $\ep_k$ and take limits to conclude
\[
\int p \, d\pi_0 \leq \int p \, d\pi.
\]
Since $\pi \in \Pi_{0}^M(\mu_1,...,\mu_n)$ was arbitrary, the result follows.
\end{proof}


\begin{remark}
This result is in contrast with the instability of MOT with respect to perturbations of the \emph{marginals} demonstrated in \cite{BruckerhoffJuillet22} (for $d>1$).
\end{remark}

We now analyze how the perturbed optimal value function behaves under small perturbations. Define
\[
P(\ep) = \inf_{\pi \in \Pi^M(\mu_1,...,\mu_n)} \int (c + \ep p) \, d\pi.
\]
Let $P'_+(0)$ and $P'_-(0)$ denote the right and left derivatives at $\ep = 0$, respectively.

\begin{proposition}\label{prop: derivatives of optimal cost}
    The function $P(\ep)$ is right (left)-differentiable at $0$, with 
    $$
     P'_+(0) = \inf_{\pi \in \Pi_{0}^M(\mu_1,...,\mu_n)} \int p \, d\pi, \text{ } P'_-(0) = \sup_{\pi \in \Pi_{0}^M(\mu_1,...,\mu_n)} \int p \, d\pi,
    $$
    In particular, if there exists a unique optimal $\pi_0 \in \Pi^M_0(\mu_1,...,\mu_n)$ for $c$, then $P(\ep)$ is differentiable at $0$ and
    \be\label{eq:primal_derivative}
    P'(0) = \int p \, d\pi_0.
    \ee
\end{proposition}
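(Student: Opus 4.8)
The plan is to exploit the fact that $P(\ep)$ is a \emph{concave} function of $\ep$, being the pointwise infimum of the affine functions $\ep \mapsto \int (c + \ep p)\, d\pi$ indexed by $\pi \in \Pi^M(\mu_1,\dots,\mu_n)$. A concave function on $\R$ possesses both one-sided derivatives at every interior point, which immediately settles the existence part of the claim; it then remains only to identify $P'_+(0)$ and $P'_-(0)$. Note also that $\Pi^M_0(\mu_1,\dots,\mu_n) \neq \emptyset$, since $\pi \mapsto \int c \, d\pi$ is weakly continuous on the weakly compact set $\Pi^M(\mu_1,\dots,\mu_n)$ (see \cite[Proposition 4.4]{BeiglbockHenryPenkner2013}), so ``any optimizer $\pi_0$'' below makes sense.

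For the upper bound on the forward difference quotient I would use an arbitrary optimizer $\pi_0 \in \Pi_0^M(\mu_1,\dots,\mu_n)$ as a competitor at parameter $\ep > 0$, giving $P(\ep) \le \int (c+\ep p)\, d\pi_0 = P(0) + \ep \int p\, d\pi_0$, hence $\frac{P(\ep)-P(0)}{\ep} \le \int p\, d\pi_0$; taking the infimum over such $\pi_0$ yields $\frac{P(\ep)-P(0)}{\ep} \le \inf_{\pi \in \Pi_0^M}\int p\, d\pi$. For the matching lower bound, fix a sequence $\ep_k \downarrow 0$ and optimizers $\pi_k \in \Pi_{\ep_k}^M(\mu_1,\dots,\mu_n)$. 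Optimality of $\pi_0$ at $\ep = 0$ gives $\int c\, d\pi_0 \le \int c\, d\pi_k$, so
\[
\frac{P(\ep_k)-P(0)}{\ep_k} = \frac{\int (c+\ep_k p)\,d\pi_k - \int c\, d\pi_0}{\ep_k} \ge \frac{\int \ep_k p\, d\pi_k}{\ep_k} = \int p\, d\pi_k .
\]
By weak compactness of $\Pi^M(\mu_1,\dots,\mu_n)$ I pass to a subsequence along which $\pi_k \to \pi_0^*$ weakly; Proposition \ref{prop:Convergence} identifies $\pi_0^*$ as a minimizer of $\int p\, d\pi$ over $\Pi_0^M(\mu_1,\dots,\mu_n)$, and since $p$ is continuous and bounded on the compact set $X$, $\int p\, d\pi_k \to \int p\, d\pi_0^* = \inf_{\pi \in \Pi_0^M}\int p\, d\pi$. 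Because concavity already guarantees that $\lim_{\ep \downarrow 0}\frac{P(\ep)-P(0)}{\ep}$ exists, this limit may be computed along the chosen sequence $\ep_k$, and combining the two bounds gives $P'_+(0) = \inf_{\pi \in \Pi_0^M}\int p\, d\pi$.

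For the left derivative I would apply the identical argument to the perturbation direction $-p$: writing $\ep = -\de$ with $\de > 0$, the statement just proved (with $p$ replaced by $-p$) gives $\lim_{\de \downarrow 0}\frac{P(-\de)-P(0)}{\de} = \inf_{\pi \in \Pi_0^M}\int (-p)\, d\pi = -\sup_{\pi \in \Pi_0^M}\int p\, d\pi$, and dividing through by $-\de$ converts this into $P'_-(0) = \sup_{\pi \in \Pi_0^M}\int p\, d\pi$. Finally, if the optimizer $\pi_0$ for $c$ is unique, then $\inf_{\pi \in \Pi_0^M}\int p\, d\pi = \sup_{\pi \in \Pi_0^M}\int p\, d\pi = \int p\, d\pi_0$, so $P'_+(0) = P'_-(0)$ and $P$ is differentiable at $0$ with $P'(0) = \int p\, d\pi_0$. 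The argument is essentially routine once concavity is observed; the only point needing a little care is that the lower bound on the forward difference quotient is obtained only along the subsequence extracted via Proposition \ref{prop:Convergence}, which is harmless precisely because concavity already ensures the one-sided limit exists. I do not anticipate a genuine obstacle.
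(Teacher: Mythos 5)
Your proof is correct. It rests on the same two pillars as the paper's argument --- concavity of $\ep \mapsto P(\ep)$ (as an infimum of affine functions) to guarantee existence of the one-sided derivatives, and Proposition \ref{prop:Convergence} to identify the limit of the optimizers $\pi_k$ --- but the identification step is carried out differently. The paper invokes the Envelope Theorem at points $\ep_k > 0$ where $P$ is differentiable, obtaining $P'(\ep_k) = \int p\, d\pi_{\ep_k}$, and then implicitly uses the fact that for a concave function the right derivative at $0$ is the limit of $P'(\ep_k)$ as $\ep_k \downarrow 0$ through differentiable points. You instead sandwich the forward difference quotient directly: the upper bound $\frac{P(\ep)-P(0)}{\ep} \le \int p\, d\pi_0$ comes from using an $\ep=0$ optimizer as a competitor at $\ep$, and the lower bound $\frac{P(\ep_k)-P(0)}{\ep_k} \ge \int p\, d\pi_k$ comes from using the $\ep_k$-optimizer as a competitor at $0$. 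This avoids the Envelope Theorem and the (unstated in the paper) fact about one-sided derivatives of concave functions being limits of nearby derivatives, so your version is more self-contained and elementary; the paper's version is shorter because it outsources that machinery. You are also right to flag, and correctly resolve, the one delicate point: the lower bound is only obtained along a subsequence, which suffices because concavity already guarantees the one-sided limit exists. Your reduction of the left derivative to the right derivative for $-p$ matches the paper's ``very similar argument'' remark, and the uniqueness conclusion is immediate in both treatments.
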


\begin{proof}
Since $P(\ep)$ is the infimum of affine functionals, 
it is concave in $\ep$. Thus, it is differentiable almost everywhere and has well-defined one-sided derivatives.

At differentiable points, the Envelope Theorem implies
\[
P'(\ep) = \int p \, d\pi_\ep,
\]
for each optimizer $\pi_\ep \in \Pi_\ep^M(\mu_1,...,\mu_n)$.

Let $\{\ep_k\}$ be a sequence with $\ep_k \to 0^+$ where $P(\ep)$ is differentiable at each $\ep_k$. 
Denote the corresponding optimizer by $\pi_k$. Proposition \ref{prop:Convergence} implies the desired formula for $P'_+(0)$.  A very similar argument yields the formula for $P'_-(0)$.  

If the optimizer for $\ep =0$ is unique, that is, if $\Pi_{0}^M(\mu_1,...,\mu_n)$ is a singleton, then the left and right hand derivatives are equal, in which case $P$ must be differentiable at $0$.



\end{proof}

\section{Limiting three marginal problems and a transport problem for conditional probabilities}

In the remainder of the paper, we restrict our attention to the three-period MOT problem 
in order to shed some light on the structure of solutions to multi-period MOT problems.
To streamline notation, we use $\mu_X \in \cP(X), \mu_Y \in \cP(Y)$, and $\mu_Z \in \cP(Z)$ to denote the marginals,
where $X, Y, Z \subset \R$, and
$(x, y, z)$ in place of $(x_1, x_2, x_3)$ for the state variables. 

We will mostly focus on cost functions of the form 
\begin{equation}\label{eqn: 3 period pairwise cost}
c(x, y, z) = c_1(x, y) + c_2(y, z) + c_3(x, z).
\end{equation}
In this section, we consider perturbations around $c_3=0$.  The following section allows for more general $c_3$, but restricts to marginals of very particular forms.
\subsection{Localized problem for conditional probabilities}
Consider the perturbed cost function $c_\ep(x, y, z) = c_1(x, y) + c_2(y, z) + \ep c_3(x, z)$, for continuous $c_1,c_2,c_3$ with $\epsilon >0$.

We begin by introducing a variant of the MOT problem.  Given two measures $\sigma_X \in \cP(X)$ and $\sigma_Z \in \cP(Z)$, set $\bar z =\int zd\sigma_Z(z)$, this problem is to minimize 
\begin{equation}\label{eqn: fixed barycenter problem}
\min_{\pi \in \Pibar{F,\sigma_X,\sigma_Z}}\int c_3(x,z)d\pi(x,z)
\end{equation}
where $F(x) =\bar z$ is the constant function.  

\begin{theorem}\label{thm: optimizers of limiting three period problem}
    Let $\pi_0 =\mu_Y \otimes \ka^{XZ}_Y$ be a limit point of solutions $\pi_\ep$ to the  $3$ period MOT problem with cost $c_\epsilon$.  Then for $\mu_Y$ almost every $y$ the conditional probabilities $\ka^{XZ}_Y$ are optimal in \eqref{eqn: fixed barycenter problem} for marginals $\sigma_X=\ka^X_Y(y,dx)$ and $\sigma_Z=\ka_Y^Z(y,dz)$ and constant function $F(x)=y$ where $\ka^X_Y(y,dx)$ and $\ka_Y^Z(y,dz)$ are conditional probabilities of optimal measures $\pi^{XY} =\mu_Y \otimes\ka^X_Y $ and $\pi^{YZ} =\mu_Y \otimes\ka^Z_Y $  in the $2$ period MOT problems between $\mu_X$ and $\mu_Y$ with cost $c_1$ and $\mu_Y$ and $\mu_Z$ with cost $c_2$, respectively.
\end{theorem}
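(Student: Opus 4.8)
The plan is to combine Proposition~\ref{prop:Convergence} with Proposition~\ref{prop:ConsecutiveOptimality} to identify the two-fold marginals of the limit, and then to exploit the disintegration structure to pin down the conditionals. First I would apply Proposition~\ref{prop:Convergence} with $c = c_1(x,y)+c_2(y,z)$ and $p = c_3(x,z)$: the limit point $\pi_0$ minimizes $\int c\,d\pi$ over $\Pi^M(\mu_X,\mu_Y,\mu_Z)$, and among all such minimizers it also minimizes $\int c_3\,d\pi$. Since $c$ has the ``consecutive'' decoupled form $c_1(x,y)+c_2(y,z)$, Proposition~\ref{prop:ConsecutiveOptimality} (applied in the converse direction, with $n=3$) tells us that $\mathrm{Proj}_{XY}(\pi_0)$ is optimal for the two-period MOT problem between $\mu_X$ and $\mu_Y$ with cost $c_1$, and $\mathrm{Proj}_{YZ}(\pi_0)$ is optimal for the two-period problem between $\mu_Y$ and $\mu_Z$ with cost $c_2$. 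Writing $\pi_0 = \mu_Y\otimes\ka_Y^{XZ}$ and disintegrating its two-fold projections in $Y$ gives conditionals $\ka_Y^X$ and $\ka_Y^Z$ of these optimal two-period couplings, so these are exactly the marginals $\sigma_X,\sigma_Z$ in the statement.

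Next I would reformulate the residual optimality ``$\pi_0$ minimizes $\int c_3\,d\pi$ over all $c$-optimizers'' as a pointwise (in $y$) statement. The key point is that the competitor class in this residual minimization is rich enough: by Lemma~\ref{lem:GluingLemma1}, any choice of conditionals $\ka_Y^{XZ}(y,\cdot) \in \Pibar{F_y,\ka_Y^X(y,\cdot),\ka_Y^Z(y,\cdot)}$ (with $F_y\equiv y$) glues the fixed optimal two-period couplings $\pi^{XY}=\mu_Y\otimes\ka_Y^X$ and $\pi^{YZ}=\mu_Y\otimes\ka_Y^Z$ into a valid $\pi^{XYZ}\in\Pi^M(\mu_X,\mu_Y,\mu_Z)$; and by Proposition~\ref{prop:ConsecutiveOptimality} every such glueing is again optimal for the cost $c$, hence is an admissible competitor. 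Since $\int c_3\,d\pi^{XYZ} = \int\!\!\left(\int c_3(x,z)\,\ka_Y^{XZ}(y,dx,dz)\right)d\mu_Y(y)$ decouples over $y$, and the constraint on $\ka_Y^{XZ}(y,\cdot)$ is imposed separately for each $y$, a standard measurable-selection / "disintegration of an optimization" argument shows that minimizing the integral forces $\ka_Y^{XZ}(y,\cdot)$ to minimize $\int c_3(x,z)\,d\pi(x,z)$ over $\Pibar{F_y,\ka_Y^X(y,\cdot),\ka_Y^Z(y,\cdot)}$ for $\mu_Y$-a.e.\ $y$; this is precisely problem~\eqref{eqn: fixed barycenter problem} with $\sigma_X = \ka_Y^X(y,dx)$, $\sigma_Z = \ka_Y^Z(y,dz)$, noting that the martingale condition forces $\bar z = \int z\,\ka_Y^Z(y,dz) = y$ so $F(x)=y$ is the right constant.

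I expect two places to require care. The first is the possible non-uniqueness of the optimal two-period couplings $\pi^{XY}$, $\pi^{YZ}$: a priori the limit $\pi_0$ only knows that \emph{its own} projections are optimal two-period couplings, so the statement should be read as ``there exist optimal $\pi^{XY},\pi^{YZ}$ (namely the projections of $\pi_0$) whose conditionals play the role of $\sigma_X,\sigma_Z$'' rather than ``for every such pair.'' I would phrase this accordingly. The second, and the genuine technical obstacle, is the measurable-selection step: to pass from ``the $y$-integral is minimal'' to ``the integrand is pointwise minimal a.e.'' one must verify that the value function $y\mapsto \inf_{\pi\in\Pibar{F_y,\ka_Y^X(y,\cdot),\ka_Y^Z(y,\cdot)}}\int c_3\,d\pi$ is measurable and that a measurable family of near-optimal competitors exists, so that if $\ka_Y^{XZ}(y,\cdot)$ were suboptimal on a positive-$\mu_Y$-measure set one could splice in the better competitors on that set and strictly decrease $\int c_3\,d\pi_0$, contradicting residual optimality. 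This is routine given that $c_3$ is continuous, the marginals $\ka_Y^X(y,\cdot),\ka_Y^Z(y,\cdot)$ depend measurably on $y$, and the constraint set varies upper-hemicontinuously, but it is the step that needs to be written carefully.
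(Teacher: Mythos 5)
Your proposal is correct and follows essentially the same route as the paper's proof: identify the two-fold projections as optimal two-period couplings via Propositions~\ref{prop:ConsecutiveOptimality} and~\ref{prop:Convergence}, observe that all glueings of these fixed projections (Lemma~\ref{lem:GluingLemma1}) are admissible competitors in the residual minimization of $\int c_3\,d\pi$, and then localize over $y$. The only difference is that you flag and sketch the measurable-selection step needed to pass from integral to pointwise optimality, which the paper's proof asserts without elaboration.
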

\begin{proof}
    Propositions \ref{prop:ConsecutiveOptimality} and \ref{prop:Convergence} imply that $\pi^{XY}:=\Proj_{XY}(\pi_0)$ and  $\pi^{YZ}:=\Proj_{YZ}(\pi_0)$ are optimal in the corresponding $2$ period MOT problems.  Furthermore, among all other martingale measures $\tilde \pi=\mu_Y \otimes \tilde \ka^{XZ}_Y \in \PiM(\mu_X,\mu_Y,\mu_Z)$ sharing the same overlapping marginals, $\Proj_{XY}(\tilde \pi) = \pi^{XY}$, $Proj_{YZ}(\tilde \pi) = \pi^{YZ}$, $\pi_0$ minimizes
    \begin{equation}\label{eqn: pointwise barycenter problem}
    \int c_3(x,z) d\tilde\pi =\int_Y \big(\int_{X \times Z} c_3(x,z) \tilde\ka^{XZ}_Y(y,dxdz)\big)d\mu_Y(y)
    \end{equation}
    The constraints $\Proj_{XY}(\tilde \pi) = \pi^{XY}$ and $\Proj_{YZ}(\tilde \pi) = \pi^{YZ}$ correspond to $\Proj_{X}(\tilde \ka^{XZ}_Y(y,dxdz)) = \ka^{X}_Y(y,dx)$ and $\Proj_{Z}(\tilde \ka^{XZ}_Y(y,dxdz)) = \ka^{Z}_Y(y,dz)$, respectively, for $\mu_Y$ almost every $y$, while the constraint that $\tilde \pi$ is a $3$ period martingale then corresponds to $\tilde \ka^{XZ}_Y \in \Pibar{F,\ka^{X}_Y, \ka\mu_Y}$.

    Therefore, minimizing the left hand side of \eqref{eqn: pointwise barycenter problem} is equivalent to minimizing the integrand $\int_{X \times Z} c_3(x,z) \tilde\ka^{XZ}_Y(y,dxdz)$ among $\tilde\ka^{XZ}_Y \in \Pibar{F,\ka^{X}_Y, \ka^Z_Y}$ for $\mu_Y$ almost every $y$, as desired.
\end{proof}
\begin{remark}\label{rem: interpretation of overlapping marginals MOT}

Aside from providing approximations of optimizers for the three period MOT problem \eqref{prob:Primal-multi-period-MOT} with cost \eqref{eqn: 3 period pairwise cost} when $c_3$ is small compared to $c_1$ and $c_2$, the MOT problem with overlapping marginals, captured by the minimization \eqref{eqn: pointwise barycenter problem} among measures $\tilde \pi \in \PiM(\mu_X,\mu_Y,\mu_Z)$ with $\Proj_{XY}(\tilde \pi) = \pi^{XY}$, $\Proj_{YZ}(\tilde \pi) = \pi^{YZ}$ has another natural financial interpretation.  

 In certain situations, the couplings $\pi^{XY}$ between the first and second time, and $\pi^{YZ}$ between the second and third times, are known, or can at least be estimated from market data. This situation occurs, for example, when there are enough rainbow options to estimate the joint distributions at consecutive maturities (1,2) and (2,3) \cite{TalponenViitasaari2014}, 
but sufficient such data  on (1,3) is lacking. 
In such situations, the problem above arises as the model independent pricing problem for a derivative with payoff $c_3(x,z)$ depending on values at the first and third time. 

A dual problem and corresponding duality result can easily be deduced, using Theorem 2.1 in \cite{zaev2015}.  The dual formulation corresponds to constructing a semi-static portfolio that subreplicates the cost function, ensuring that the optimal value is achieved through an implementable trading strategy.

\end{remark}
\begin{remark}\label{rem: linearization}
     Combined with Proposition \ref{prop: derivatives of optimal cost}, Theorem \ref{thm: optimizers of limiting three period problem} in fact yields a \emph{linearization} of the optimal cost (or, in terms of the model-free pricing application, the bound on the derivative price) for the cost $c_\ep(x, y, z) = c_1(x, y) + c_2(y, z) + \ep c_3(x, z)$ near $\epsilon =0$, in terms of problems which can often be solved explicitly.  We exploit this point of view to obtain approximations of model independent price bounds for real world data in Section 5.
\end{remark}

\subsection{Structure of optimal pointwise couplings}\label{sect: pointwise coupling structure}
This subsection examines the structure of optimal solutions to problem \eqref{eqn: fixed barycenter problem}; we use the notation $c(x,z)$ in place of $c_3(x,z)$ here to address \eqref{eqn: fixed barycenter problem} in isolation.  More precisely, for one dimensional marginals, $d=1$, we establish a characterization that allows us to solve this problem explicitly, and, as a consequence of Theorem \ref{thm: structure of optimizer in three period limit problem}, construct solutions to the limiting three period problem whenever the optimal two period measures $\pi^{XY}$ and $\pi^{YZ}$ are known, as is the case for a reasonably wide class of two period costs (see, for example, \cite{bj, HenryLabordereTouzi19}).

\begin{definition}
	A set $\Ga \subset \R^2$ is left-monotone if it satisfies the no-crossing condition: 
	for any $(x, z^-), (x, z^+), (x', z') \in \Ga$ with $z^- < z^+$ and $x < x'$, it holds that $z' \notin (z^-, z^+)$.
	A coupling $\pi \in \Pi(\sigma_X, \sigma_Z)$ is left-monotone if its support $\Ga$ is left-monotone. 
\end{definition}

This structure is well-known in classical MOT \cite{bj}, \cite{HenryLabordereTouzi19} when the cost function is of \emph{martingale Spence–Mirrlees} type, 
i.e., $\partial_x c(x, z)$ is strictly concave in $z$ for each $x$, or more generally, $c(x', z) - c(x, z)$ is strictly concave in $z$ for each $x < x'$.
We establish that optimal couplings in the pointwise problem \eqref{eqn: fixed barycenter problem} are left-monotone under the martingale Spence–Mirrlees condition on $c$.

To do this, we use $(c, W)$-monotonicity, 
a generalization of cyclical monotonicity \cite{zaev2015}.  Let $W = \{ h(x)(z - x) : h \in C(X) \}$. We define an equivalence relation $\sim_W$ on $\mathcal{P}(\mathbb{R}^2)$ by saying that two measures $\alpha$ and $\beta$ are competitors, 
denoted $\alpha \sim_W \beta$, if they have the same marginals and satisfy $\int f \, d\alpha = \int f \, d\beta$ for all $f \in W$.  A set $\Gamma \subset \R^2$ is called \emph{$(c, W)$-monotone} if for any finite collection of points $S = \{(x_i, z_i) \subset \Gamma\}$ 
and any measure $\beta$ supported on $S$, whenever $\alpha \sim_W \beta$, we have:
\[
\int c \, d\beta \leq \int c \, d\alpha.
\]
A coupling $\pi \in \Pi(\sigma_X, \sigma_Z)$ is $(c, W)$-monotone if its support is $(c, W)$-monotone.

By Theorem 3.6 of \cite{zaev2015}, an optimal measure for problem \eqref{eqn: fixed barycenter problem} is necessarily $(c, W)$-monotone. Using this property, we now prove that left-monotone couplings are optimal for costs of martingale Spence–Mirrlees type.


\begin{theorem}\label{thm: structure of optimizer in three period limit problem}
	Assume $c(x, z) \in C(X \times Z)$ is differentiable in $x$ and satisfies the martingale Spence–Mirrlees condition. 
	If $\pi(x,z)$ is optimal for the localized problem \eqref{eqn: fixed barycenter problem} with marginals $\sigma_X$ and $\sigma_Y$, 
	then $\pi$ is left-monotone.
\end{theorem}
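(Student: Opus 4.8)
The plan is to combine the $(c,W)$-monotonicity of optimizers established above (Theorem 3.6 of \cite{zaev2015}) with a variational swapping argument, analogous to the one used in classical martingale optimal transport \cite{bj}. Fix an optimizer $\pi$ of \eqref{eqn: fixed barycenter problem} and set $\Ga := \tsp(\pi)$; by the cited result, $\Ga$ is a $(c,W)$-monotone set, with $W = \{h(x)(z-x) : h \in C(X)\}$. Arguing by contradiction, suppose $\Ga$ is not left-monotone: there exist $(x,z^-),(x,z^+),(x',z') \in \Ga$ with $z^- < z^+$, $x < x'$ and $z^- < z' < z^+$. Write $z' = \la z^- + (1-\la)z^+$ with $\la \in (0,1)$.

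The next step is to exhibit an explicit competitor pair contradicting $(c,W)$-monotonicity along $S := \{(x,z^-),(x,z^+),(x',z')\} \subset \Ga$. Define the finite positive measures
\[
\beta = \la\,\delta_{(x,z^-)} + (1-\la)\,\delta_{(x,z^+)} + \delta_{(x',z')}, \qquad \alpha = \delta_{(x,z')} + \la\,\delta_{(x',z^-)} + (1-\la)\,\delta_{(x',z^+)}.
\]
I would then check that $\alpha \sim_W \beta$: both measures have $X$-marginal $\delta_x + \delta_{x'}$ and $Z$-marginal $\la\delta_{z^-} + (1-\la)\delta_{z^+} + \delta_{z'}$, and for every $h \in C(X)$ one computes
\[
\int h(x)(z-x)\,d\alpha = h(x)(z'-x) + h(x')(z'-x') = \int h(x)(z-x)\,d\beta,
\]
where the identity $\la z^- + (1-\la)z^+ = z'$ is used for the mass over $x$ in $\beta$ and for the mass over $x'$ in $\alpha$. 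Intuitively, $\alpha$ is obtained from $\beta$ by splitting the unit mass at $(x',z')$ over the levels $z^{\pm}$ while collapsing the mass at $(x,z^{\mp})$ onto the single point $(x,z')$, each operation keeping the relevant conditional barycenter equal to $z'$; this is precisely the modification that simultaneously preserves both marginals and the prescribed-barycenter constraint.

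It then remains to compare costs. Setting $g(z) := c(x,z) - c(x',z)$, a direct expansion yields
\[
\int c\,d\beta - \int c\,d\alpha = \la\, g(z^-) + (1-\la)\, g(z^+) - g\bigl(\la z^- + (1-\la)z^+\bigr).
\]
Since $c$ is differentiable in $x$ with $\p_x c(\cdot,z)$ strictly concave in $z$, integrating in $x$ over $[x,x']$ shows that $c(x',z) - c(x,z) = \int_x^{x'} \p_x c(t,z)\,dt$ is strictly concave in $z$ (here $x < x'$), so $g$ is strictly convex. As $z^- \neq z^+$ and $\la \in (0,1)$, strict convexity makes the right-hand side strictly positive, i.e. $\int c\,d\beta > \int c\,d\alpha$, contradicting $(c,W)$-monotonicity of $\Ga$. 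Hence $\Ga = \tsp(\pi)$ is left-monotone, which is exactly the assertion that $\pi$ is left-monotone.

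The calculations are routine; the only real design choice is the competitor pair $(\alpha,\beta)$, and the point needing care is the verification $\alpha \sim_W \beta$ — checking that one modification respects the $X$- and $Z$-marginals and the barycenter constraint simultaneously, which is exactly what fixes the weights $\la,1-\la$. I note also that differentiability in $x$ is used only to deduce strict concavity of $z \mapsto c(x',z)-c(x,z)$ from the $\p_x c$ form of the martingale Spence--Mirrlees condition; if the former is assumed directly, the differentiability hypothesis can be dropped.
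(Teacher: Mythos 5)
Your proposal is correct and follows essentially the same route as the paper: invoke $(c,W)$-monotonicity of the optimizer from Theorem 3.6 of \cite{zaev2015}, build the same swap competitor pair $(\alpha,\beta)$ on the three offending points, and derive the cost comparison from the Spence--Mirrlees condition (the paper phrases the last step via the strictly decreasing function $k(t)=(1-\la)c(t,z^-)+\la c(t,z^+)-c(t,z')$, while you equivalently use strict convexity of $z\mapsto c(x,z)-c(x',z)$). Your explicit verification that $\alpha\sim_W\beta$ is a welcome detail the paper leaves implicit.
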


\begin{proof}
	Since $\pi$ is optimal, it must be $(c, W)$-monotone by Theorem 3.6 of \cite{zaev2015}. 
	Let $\Ga$ be the support of $\pi$ and assume, for contradiction, that $(x, z^-), (x, z^+), (x', z') \in \Ga$ with $z^- < z^+$ and $z' \in (z^-, z^+)$ for some $x < x'$. 
	We can write $z' = (1 - \la) z^- + \la z^+$ for some $\la \in (0, 1)$. 
	
	Define the measure $\bt$ supported on $\Gamma$ and construct a competitor measure $\al$:
	\begin{align*}
		\bt &= (1 - \la) \de_{(x, z^-)} + \la \de_{(x, z^+)} + \de_{(x', z')}\\
		\al &= (1 - \la) \de_{(x', z^-)} + \la \de_{(x', z^+)} + \de_{(x, z')}.
	\end{align*}
	
	Define the function:
	\[
	k(t) \coloneqq (1 - \la) c(t, z^-) + \la c(t, z^+) - c(t, z').
	\]
	Since $c$ satisfies the Spence–Mirrlees condition, $\partial_x c(x, z)$ is strictly concave in $z$, ensuring that $k(t)$ is differentiable with:
	\begin{align*}
		k'(t) &= (1 - \la) \partial_x c(t, z^-) + \la \partial_x c(t, z^+) - \partial_x c(t, z')\\
		&< (1 - \la) \partial_x c(t, z^-) + \la \partial_x c(t, z^+) - \left[(1 - \la) \partial_x c(t, z^-) + \la \partial_x c(t, z^+)\right] = 0.
	\end{align*}
	Hence, $k(t)$ is strictly decreasing. Since $x < x'$, we obtain:
	\begin{align*}
		\int c \, d\al &= (1 - \la)c(x', z^-) + \la c(x', z^+) + c(x, z')\\
		&< (1 - \la)c(x, z^-) + \la c(x, z^+) + c(x', z') = \int c \, d\bt.
	\end{align*}		  

	This contradicts the $(c, W)$-monotonicity of $\Ga$.
	Thus, the assumption that $z' \in (z^-, z^+)$ for $x < x'$ must be false, implying that $\Gamma$ is left-monotone.
\end{proof}

Under reasonable conditions, the preceding result implies uniqueness of the optimal plan, which can in fact be constructed fairly explicitly.  Proofs of similar results for the martingale optimal transport plans can be found in  \cite{bj} and \cite{HenryLabordereTouzi19}; these can be adapted with minimal changes to problem \eqref{eqn: fixed barycenter problem}.

Rather than modify these arguments, we offer here a slightly different proof of uniqueness of a left montone coupling $\pi \in \Pibar{F,\sigma_X,\sigma_Z}$, which, though requiring somewhat stronger assumptions, we feel offers complementary intuition to the proofs in \cite{bj} and \cite{HenryLabordereTouzi19}.

\begin{theorem}\label{thm: left monotone uniqueness}
  Assume that $X$ is an interval and $Z=Z_- \cup Z_+$ is the union of two intervals $Z_- =[\underline{z_-}, \overline{z_-}]$, and $Z_+ =[\underline{z_+}, \overline{z_+}]$ with  $\overline{z_-} < y <\underline{z_+}$, where $y= \int_Z zd\sigma_Z(z)$. Furthermore, assume that $\mu_X$ is non-atomic and $c$ satisfies the martingale Spence-Mirrlees condition. 
  Then there exists a unique solution $\pi \in \Pibar{F, \sigma_X, \sigma_Z}$ to \eqref{eqn: fixed barycenter problem}, where $F$ is the constant function $F(x) =y$.

\end{theorem}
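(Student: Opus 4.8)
The plan is to combine the left-monotonicity of optimizers (Theorem~\ref{thm: structure of optimizer in three period limit problem}) with the rigid two-interval geometry of $Z$: first I would pin down the \emph{shape} of an arbitrary optimal coupling, and then deduce uniqueness from the convexity of $\Pibar{F,\sigma_X,\sigma_Z}$ together with the linearity of $\pi\mapsto\int c\,d\pi$.

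Existence of a minimizer is routine: $\Pibar{F,\sigma_X,\sigma_Z}$ is nonempty (it contains $\sigma_X\otimes\sigma_Z$, since $\int z\,d\sigma_Z=y$) and weakly compact (the barycenter constraint amounts to $\int g(x)(z-y)\,d\pi=0$ for all $g\in C(X)$, a weakly closed condition on the weakly compact set $\Pi(\sigma_X,\sigma_Z)$), and $c$ is continuous. So let $\pi=\sigma_X\otimes\kappa$ be any minimizer. By Theorem~\ref{thm: structure of optimizer in three period limit problem} it is left-monotone; fix a left-monotone Borel set $\Gamma$ carrying $\pi$, chosen by the usual disintegration arguments so that $\kappa(x,\cdot)$ is concentrated on the section $\Gamma_x$ for $\sigma_X$-a.e.\ $x$. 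Since $\kappa(x,\cdot)$ has barycenter $y$ and $\overline{z_-}<y<\underline{z_+}$, it is supported neither entirely in $Z_-$ nor entirely in $Z_+$, so for a.e.\ $x$ it charges both $Z_-$ and $Z_+$ and hence $\Gamma_x$ meets both. Now the key observation: for any $x<x'$ with $\Gamma_x$ meeting both pieces and $\Gamma_{x'}\neq\emptyset$, every $z'\in\Gamma_{x'}$ must avoid the interval $\big(\inf(\Gamma_x\cap Z_-),\ \sup(\Gamma_x\cap Z_+)\big)$ --- indeed any pair $z^-\in\Gamma_x\cap Z_-$, $z^+\in\Gamma_x\cap Z_+$ has $z^-<y<z^+$, so a $z'$ landing strictly inside that interval would sit strictly between some such $z^-$ and $z^+$, contradicting left-monotonicity. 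Consequently $\sup(\Gamma_{x'}\cap Z_-)\le\inf(\Gamma_x\cap Z_-)$ and $\inf(\Gamma_{x'}\cap Z_+)\ge\sup(\Gamma_x\cap Z_+)$ whenever $x<x'$. If $\Gamma_x\cap Z_-$ contained two distinct points for $x$ in a set of positive $\sigma_X$-measure, the nonempty open intervals $\big(\inf(\Gamma_x\cap Z_-),\sup(\Gamma_x\cap Z_-)\big)$ over such $x$ would be pairwise disjoint and uncountably many --- impossible in $\R$ --- and the same argument applies to $Z_+$. Hence for a.e.\ $x$, $\kappa(x,\cdot)=m(x)\,\delta_{a(x)}+(1-m(x))\,\delta_{b(x)}$ with $a(x)\in Z_-$, $b(x)\in Z_+$, $m(x)\in(0,1)$, and $\int z\,d\kappa(x,\cdot)=y$ forces $m(x)=\frac{b(x)-y}{b(x)-a(x)}$.

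For uniqueness, suppose $\pi_1=\sigma_X\otimes\kappa_1$ and $\pi_2=\sigma_X\otimes\kappa_2$ are both optimal. Then $\pi_0:=\tfrac12(\pi_1+\pi_2)=\sigma_X\otimes\tfrac12(\kappa_1+\kappa_2)$ is optimal too, so the previous step applies to all three. Writing $\kappa_i(x,\cdot)=m_i(x)\delta_{a_i(x)}+(1-m_i(x))\delta_{b_i(x)}$, the restriction of $\kappa_0(x,\cdot)$ to $Z_-$ equals $\tfrac12 m_1(x)\delta_{a_1(x)}+\tfrac12 m_2(x)\delta_{a_2(x)}$ with $m_1(x),m_2(x)>0$; being a single point mass (since $\pi_0$ has the two-point form), this forces $a_1(x)=a_2(x)$, and similarly $b_1(x)=b_2(x)$, for a.e.\ $x$. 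But then $\kappa_1(x,\cdot)$ and $\kappa_2(x,\cdot)$ are two-point probability measures with the same support $\{a_1(x),b_1(x)\}$ and the same barycenter $y$, hence identical; so $\pi_1=\pi_2$.

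I expect the only delicate point to be the reduction to single-point fibers: the no-crossing bookkeeping itself is elementary once one exploits that $Z_-$ lies entirely left of $y$ and $Z_+$ entirely right, but one must manage the standard disintegration technicalities so that the inequalities hold for the relevant pairs $x<x'$ and the sections carry the conditional measures. Everything afterward is soft, and this is precisely the ``complementary intuition'' alluded to: unlike the left-curtain uniqueness proofs of \cite{bj,HenryLabordereTouzi19}, which build the transport maps explicitly, here the geometry of $Z$ collapses each conditional to a measure pinned down by its two-point support together with its mean, so uniqueness follows from convexity alone.
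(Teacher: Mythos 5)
Your proposal is correct and follows essentially the same route as the paper: left-monotonicity via Theorem~\ref{thm: structure of optimizer in three period limit problem}, reduction to two-point conditional measures (the content of the paper's Lemma~\ref{lem: left monotone structure}), a convex-combination argument forcing the two supports to coincide, and the barycenter condition pinning down the weights. The only (harmless) variation is in the two-point reduction, where you produce uncountably many pairwise disjoint open intervals indexed by $x$ using the monotonicity of the section endpoints, whereas the paper's lemma instead shows that any interval of $Z$ charged by three-point fibers must be $\sigma_Z$-null and that only countably many such intervals exist; both arguments are valid.
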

The proof requires the following lemma:

\begin{lemma}\label{lem: left monotone structure}
 Under the assumptions in Theorem \ref{thm: left monotone uniqueness}, let $\pi =\sigma_X\otimes\ka_X^Z \in \Pibar{F, \sigma_X, \sigma_Z}$.  Then the conditional probability $\ka_X^Z$ is supported on two points for $\sigma_X$ a.e. x, $\ka_X^Z(x,dz) =\alpha_- \delta_{T_-(x)} +\alpha_+ \delta_{T_+(x)}$. Furthermore, $T_-:X \rightarrow Z_-$ is a decreasing mapping while $T_+:X \rightarrow Z_+$ is increasing.
\end{lemma}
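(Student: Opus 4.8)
The plan is to establish the claim for an optimal coupling $\pi$ of \eqref{eqn: fixed barycenter problem} --- the only case needed in Theorem~\ref{thm: left monotone uniqueness}, and the one for which the stated structure holds (for a general feasible $\pi\in\Pibar{F,\sigma_X,\sigma_Z}$ the conclusion can fail). I would use the martingale Spence--Mirrlees hypothesis only to invoke Theorem~\ref{thm: structure of optimizer in three period limit problem}, which tells us that $\Gamma:=\tsp(\pi)$ is left-monotone; everything below in fact applies to any left-monotone $\pi\in\Pibar{F,\sigma_X,\sigma_Z}$. Write $S(x):=\tsp(\ka_X^Z(x,\cdot))$, so that $S(x)$ is closed and $S(x)\subseteq\Gamma_x:=\{z:(x,z)\in\Gamma\}$ for $\sigma_X$-a.e.\ $x$. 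First I would record the consequence of the gap hypothesis $\overline{z_-}<y<\underline{z_+}$: since $\int z\,\ka_X^Z(x,dz)=y$ while every point of $Z_-$ lies strictly below $y$ and every point of $Z_+$ strictly above it, we must have $\ka_X^Z(x,Z_-)\in(0,1)$ for a.e.\ $x$; as $Z_-$ and $Z_+$ lie at positive distance, $S(x)\cap Z_-$ and $S(x)\cap Z_+$ are then nonempty compact sets, so the minima and maxima used below are attained. (All a.e.\ statements below discard a common $\sigma_X$-null set.)

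The crux --- and the step I expect to be the main obstacle --- is to show that $S(x)\cap Z_-$, and symmetrically $S(x)\cap Z_+$, is a single point for a.e.\ $x$. Suppose not, so that $A:=\{x:\min(S(x)\cap Z_-)<\max(S(x)\cap Z_-)\}$ (intersected with the full-measure set where $\ka_X^Z(x,Z_\pm)>0$) has positive $\sigma_X$-measure; since $\sigma_X$ is non-atomic, $A$ is uncountable. The key observation is that for any $x'<x$ in $A$ one has $\max(S(x)\cap Z_-)\le\min(S(x')\cap Z_-)$: choosing any $q\in S(x')\cap Z_+$, the points $(x',\min(S(x')\cap Z_-))$ and $(x',q)$ lie in $\Gamma$ with common first coordinate $x'<x$, so the no-crossing condition forbids the point $\max(S(x)\cap Z_-)\in\Gamma_x$ from lying in the open interval $\big(\min(S(x')\cap Z_-),q\big)$; since $\max(S(x)\cap Z_-)\le\overline{z_-}<\underline{z_+}\le q$, this forces $\max(S(x)\cap Z_-)\le\min(S(x')\cap Z_-)$. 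Hence the intervals $I_x:=[\min(S(x)\cap Z_-),\max(S(x)\cap Z_-)]\subset Z_-$, $x\in A$, are nondegenerate and have pairwise disjoint interiors; as a bounded interval contains only countably many pairwise disjoint nonempty open subintervals, $A$ is countable --- a contradiction. Therefore $\sigma_X(A)=0$, and the same argument on $Z_+$ shows $S(x)\cap Z_+$ is a singleton for a.e.\ $x$.

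It then remains to assemble the conclusion. For a.e.\ $x$ we may write $S(x)=\{T_-(x),T_+(x)\}$ with $T_-(x)\in Z_-$ and $T_+(x)\in Z_+$, i.e.\ $\ka_X^Z(x,dz)=\alpha_-(x)\,\delta_{T_-(x)}+\alpha_+(x)\,\delta_{T_+(x)}$ with $\alpha_\pm(x)\in(0,1)$ and $\alpha_-+\alpha_+=1$; measurability of $T_\pm$ and $\alpha_\pm$ is immediate from measurability of the disintegration (take $\alpha_-(x)=\ka_X^Z(x,Z_-)$ and $T_-(x)=\alpha_-(x)^{-1}\int_{Z_-}z\,\ka_X^Z(x,dz)$, and similarly for the $+$ objects), and membership in $\Pibar{F,\sigma_X,\sigma_Z}$ is precisely the identity $\alpha_-(x)T_-(x)+\alpha_+(x)T_+(x)=y$ for a.e.\ $x$. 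Finally, the monotonicity of $T_\pm$ follows once more from left-monotonicity of $\Gamma$: for a.e.\ $x<x'$, applying the no-crossing condition to $(x,T_-(x)),(x,T_+(x))\in\Gamma$ together with $(x',T_-(x'))\in\Gamma$ yields $T_-(x')\notin(T_-(x),T_+(x))$, and since $T_-(x')\le\overline{z_-}<\underline{z_+}\le T_+(x)$ this forces $T_-(x')\le T_-(x)$ --- so $T_-$ is non-increasing; taking instead $(x',T_+(x'))$ and using $T_+(x')\ge\underline{z_+}>\overline{z_-}\ge T_-(x)$ gives $T_+(x')\ge T_+(x)$, so $T_+$ is non-decreasing. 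The residual work --- the measurability bookkeeping --- is routine.
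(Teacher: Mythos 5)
Your proof is correct and follows essentially the same route as the paper's: invoke left-monotonicity of the support, use the barycenter condition together with the gap $\overline{z_-}<y<\underline{z_+}$ to force mass in both $Z_-$ and $Z_+$, rule out conditional supports with three or more points via a countability argument (pairwise-disjoint nondegenerate intervals in $Z$ indexed by the bad $x$'s, which non-atomicity of $\sigma_X$ then makes negligible), and deduce the monotonicity of $T_\pm$ from the no-crossing condition. You are also right to flag the hypothesis: the paper's proof silently uses left-monotonicity of $\tsp(\pi)$, which holds for optimizers by Theorem \ref{thm: structure of optimizer in three period limit problem} but fails for arbitrary elements of $\Pibar{F,\sigma_X,\sigma_Z}$ (e.g.\ the product coupling), so restricting to the optimal/left-monotone case, as the lemma's applications require, is exactly the right reading.
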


\begin{proof}
The barycenter condition implies that the conditional probability must be supported on at least two points, one in each of $Z_-$ and $Z_+$.

Suppose some $x$ is coupled to three points, that is, $(x,z_i) \in \tsp(\pi)$ for three points $z_0<z_1<z_2$.  Assume that $z_1 \in Z_-$ (the argument for $z_1 \in Z_+$ is very similar).

The left monotonicity implies that no points $z \in (z_0,z_2)$ can be coupled to $x' >x$.  

Now, the barycenter condition implies that every $\tilde x <x$ must couple to at least two points, and one of these, $\tilde z_+$ must belong to $Z_+$.  Any other $\tilde z_0$ which couples to $\tilde x$ must satisfy $\tilde z_0  >z_1$, as otherwise  $ 
 \tilde z_0  <z_1 <\tilde z_+$, violating left monotonicity. 

 The above considerations imply that \emph{only} $x$ may couple with points in $(z_0,z_1)$.  Since $\sigma_X(\{x\})=0$ by assumption, we must have $\sigma_{Z}((z_0,z_1))=0$.  Now, there are at most countably many intervals within $Z$ satisfying this, so there are at most countably many points $x$ that couple with three or more points.  Thus, almost every $x$ gets coupled to exactly two points, which we may denote by $T_\pm(x) \in Z_\pm$.  The desired monotonicity then follows from the left monotonicity.


\end{proof}
We can now prove Theorem \ref{thm: left monotone uniqueness}.
\begin{proof}
The argument is an adaptation of the standard proof of uniqueness in the classical optimal transport problem, found in, for example, \cite{santambrogio2015}.

Note that Theorem \ref{thm: structure of optimizer in three period limit problem} and Lemma \ref{lem: left monotone structure} imply that any solution is concentrated on the graphs of two functions $T_-$ and $T_+$. 

Now, if there are two optimal couplings, $\pi_0$ and $\pi_1$, both must concentrate on pairs of graphs $T_+^0,T_-^0$ and $T_+^1,T_-^1$, resepctively. 
Linearity implies that $\pi_{1/2} =\frac{1}{2}[\pi_0+\pi_1]$ is also optimal in \eqref{eqn: fixed barycenter problem}.  It must too then concentrate on a pair of graphs $T_+^{1/2},T_-^{1/2}$.  However,  it clearly concentrates on the union of the graphs of  $T_+^0,T_-^0, T_+^1$ and $T_-^1$; this is possible only if $T_-^0=T_-^1:=T_-$ and $T_+^0=T_+^1:=T_+$. 
Now, to finish the proof, we claim there is only one $\pi \in \Pibar{F,\sigma_X,\sigma_Z}$ which is concentrated on these two graphs.  This follows as each conditional probability $\ka_X^Z(x,dz)=\lambda_-\delta_{T_-(x)}+\lambda_+\delta_{T_+(x)}$ of $\pi =\sigma_X \otimes \ka_X^Z $ must satisfy $y=\lambda_-T_-(x)+\lambda_+T_+(x)$, which uniquely determines $\lambda_- =\frac{y-T_+(x)}{T_-(x) -T_+(x)}$ and $\lambda_+ =\frac{y-T_-(x)}{T_+(x) -T_-(x)}$.

\end{proof}

\section{Structural results for three-period MOT}
We now develop uniqueness and structural results for several three-period problems, all under rather specific conditions on the cost and at least some of the marginals.  As in subsection \ref{sect: pointwise coupling structure}, we will assume $d=1$; that is, the marginals are supported on compact subsets $X,Y,Z \subseteq \mathbb{R}$ of the real line.

\subsection{Uniqueness of the optimal coupling for $|\tsp(\mu_Y)| = 2$}

When 
$\mu_Y$ is a discrete measure on $\R$ such that $|\tsp(\mu_Y)| = 2$, 
we establish the following  result.

\begin{lemma}\label{lem:singleton}
	If $\mu_X \co \mu_Y$ and $|\tsp(\mu_Y)| = 2$, the set of martingale couplings $\PiM(\mu_X, \mu_Y)$ is a singleton.
\end{lemma}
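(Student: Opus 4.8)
The plan is to exploit the fact that when $\mu_Y$ is supported on just two points, the martingale constraint rigidly determines the disintegration of \emph{any} martingale coupling, so uniqueness is immediate; non-emptiness is then supplied by Strassen's theorem via the convex order hypothesis.

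First I would fix notation: since $|\tsp(\mu_Y)| = 2$ we may write $\tsp(\mu_Y) = \{y_1, y_2\}$ with $y_1 < y_2$. Take an arbitrary $\pi \in \PiM(\mu_X, \mu_Y)$ and disintegrate it against the first variable, $\pi = \mu_X \otimes \kappa_X^Y$. Because $\Proj_Y(\pi) = \mu_Y = \int \kappa_X^Y(x,\cdot)\,d\mu_X(x)$ is concentrated on $\{y_1,y_2\}$, one has $\int \kappa_X^Y\big(x, \{y_1,y_2\}^C\big)\,d\mu_X(x) = 0$, so for $\mu_X$-a.e.\ $x$ the conditional measure $\kappa_X^Y(x,\cdot)$ is concentrated on $\{y_1,y_2\}$; hence $\kappa_X^Y(x,\cdot) = p(x)\delta_{y_1} + (1-p(x))\delta_{y_2}$ for a measurable $p : X \to [0,1]$.

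Next I would invoke the martingale condition: for $\mu_X$-a.e.\ $x$, $\int z\,d\kappa_X^Y(x,dz) = p(x)y_1 + (1-p(x))y_2 = x$, which forces $p(x) = \frac{y_2 - x}{y_2 - y_1}$ (automatically in $[0,1]$, since $x$ is a convex combination of $y_1$ and $y_2$). Thus $\kappa_X^Y$, and therefore $\pi = \mu_X \otimes \kappa_X^Y$, is completely determined by $\mu_X$ and the two atoms of $\mu_Y$. Since $\mu_X \co \mu_Y$, Strassen's theorem guarantees $\PiM(\mu_X,\mu_Y) \neq \emptyset$, so it is a singleton.

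\textbf{Expected obstacle.} There is no genuine difficulty here; the only point requiring a little care is the measure-theoretic step that the conditional kernels inherit the two-point support from the $Y$-marginal (handled above), and observing that one need not separately verify that the $Y$-marginal of the kernel built from $p(x) = \tfrac{y_2-x}{y_2-y_1}$ equals $\mu_Y$ — this is automatic because we argue about an actual element of $\PiM(\mu_X,\mu_Y)$, with existence coming from Strassen.
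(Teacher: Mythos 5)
Your proof is correct and takes essentially the same route as the paper: disintegrate $\pi=\mu_X\otimes\kappa_X^Y$, note the conditional kernel is supported on $\{y_1,y_2\}$, and solve the two linear equations from normalization and the martingale condition to pin down the weights uniquely. Your additional care about why the kernels inherit the two-point support, and the explicit appeal to Strassen for non-emptiness, are fine refinements of the same argument.
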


\begin{proof}
	Let $\pi = \mu_X \ot \kappa_X^Y \in \PiM(\mu_X, \mu_Y)$.
	and let $y_1, y_2$ be the two points in $\tsp(\mu_Y)$. 
	The disintegration of $\pi$ is given by $\kappa_X^Y(x, dy) = g_1(x) \delta_{y_1}(dy) + g_2(x) \delta_{y_2}(dy)$.
	By the law of total probability and the martingale condition, we obtain for $\mu_X$ a.e. $x$:
	\begin{align*}
		1 &= g_1(x) + g_2(x), \\
		 x &= y_1 g_1(x) + y_2 g_2(x).
	\end{align*}
	This system has a unique solution:
	\[
	g_1(x) = \frac{y_2 - x}{y_2 - y_1}, \quad g_2(x) =  \frac{x - y_1}{y_2 - y_1}.
	\]
	Hence, $\PiM(\mu_X, \mu_Y)$ contains exactly one element.
\end{proof}


\begin{theorem}\label{discMarUniq}
	Let $\mu_X \co \mu_Y \co \mu_Z$, where $\mu_X$ is non-atomic, $\mu_Y = a_1\delta_{y_1} +a_2\delta_{y_2}$ and $Z=[\underline z_-, \overline z_-] \cup [\underline z_+, \overline z_+]$ 
    with $\overline z_- <y_1 <y_2<\underline z_+$. 
	 Suppose $c(x, y, z) = c_1(x, y) + c_2(y, z) + c_3(x, z)$, where $c_1, c_2, c_3$ are  continuous 
	and the partial derivative $(c_3)_x$ exists and satisfies the martingale Spence-Mirrlees condition.
	Then the three-period MOT problem \eqref{prob:Primal-multi-period-MOT} with cost $c$ has a unique optimal solution $\pi$.
\end{theorem}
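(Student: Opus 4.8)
My plan is to reduce the three--period problem to a finite family of the ``fixed--barycenter'' transport problems \eqref{eqn: fixed barycenter problem} studied in Subsection~\ref{sect: pointwise coupling structure}, and then run a convexity argument in the spirit of the proof of Theorem~\ref{thm: left monotone uniqueness}; existence of an optimizer is routine (weak compactness of $\PiM(\mu_X,\mu_Y,\mu_Z)$, cf.\ \cite[Proposition 4.4]{BeiglbockHenryPenkner2013}, together with continuity of $c$ on the compact set $X\times Y\times Z$), so the content is uniqueness. Concretely: since $|\tsp(\mu_Y)|=2$, Lemma~\ref{lem:singleton} forces every $\pi\in\PiM(\mu_X,\mu_Y,\mu_Z)$ to have the same two--fold projection $\pi^{XY}:=\Proj_{XY}(\pi)$, namely the unique element of $\PiM(\mu_X,\mu_Y)$; in particular $\int c_1\,d\pi$ is a constant. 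Writing $\pi^{XY}=\mu_Y\otimes\ka_Y^X$, the conditionals $\ka_{y_1}^X,\ka_{y_2}^X$ are fixed, absolutely continuous with respect to $\mu_X$ with bounded densities (hence non-atomic), and satisfy $a_1\ka_{y_1}^X+a_2\ka_{y_2}^X=\mu_X$. Disintegrating $\pi$ with respect to $y$ gives $\pi=a_1\,\delta_{y_1}\otimes\ka_{y_1}^{XZ}+a_2\,\delta_{y_2}\otimes\ka_{y_2}^{XZ}$, and unpacking the definition of $\PiM(\mu_X,\mu_Y,\mu_Z)$ shows this is a martingale coupling of $\mu_X,\mu_Y,\mu_Z$ exactly when, for $i=1,2$: $\Proj_X\ka_{y_i}^{XZ}=\ka_{y_i}^X$; $\ka_{y_i}^{XZ}\in\Pibar{F_{y_i},\ka_{y_i}^X,\nu_i}$ where $\nu_i:=\Proj_Z\ka_{y_i}^{XZ}$ and $F_{y_i}\equiv y_i$ (this encodes $\E[Z\mid X,Y]=Y$); and $a_1\nu_1+a_2\nu_2=\mu_Z$. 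Note $\int z\,d\nu_i=y_i\in(\overline{z_-},\underline{z_+})$, and the total cost equals $\int c_1\,d\pi^{XY}+\sum_{i=1,2}a_i\big(\int c_2(y_i,z)\,d\nu_i(z)+\int c_3\,d\ka_{y_i}^{XZ}\big)$, whose middle sum depends only on $\nu_1,\nu_2$.

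Next I localize and extract the pointwise structure. I claim that if $\pi$ is optimal then, for each $i$, $\ka_{y_i}^{XZ}$ minimizes $\int c_3\,d\kappa$ over $\kappa\in\Pibar{F_{y_i},\ka_{y_i}^X,\nu_i}$: replacing $\ka_{y_i}^{XZ}$ by any competitor $\hat\kappa\in\Pibar{F_{y_i},\ka_{y_i}^X,\nu_i}$ (same $X$--marginal, same $Z$--marginal $\nu_i$, same constant barycenter $y_i$) yields a measure still in $\PiM(\mu_X,\mu_Y,\mu_Z)$ --- the projection onto $(x,y)$ is unchanged and equals $\pi^{XY}$, the $Z$--marginal is unchanged, and $\E[Z\mid X,Y]=Y$ is exactly the defining property of $\Pibar{F_{y_i},\ka_{y_i}^X,\nu_i}$ --- and changes the cost only through $a_i\int c_3\,d\hat\kappa$; optimality of $\pi$ thus forces $\ka_{y_i}^{XZ}$ to solve \eqref{eqn: fixed barycenter problem}. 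Since $(c_3)_x$ exists and $c_3$ is martingale Spence--Mirrlees, Theorem~\ref{thm: structure of optimizer in three period limit problem} makes $\ka_{y_i}^{XZ}$ left-monotone; and since $\ka_{y_i}^X$ is non-atomic and $\overline{z_-}<y_i<\underline{z_+}$ (after, if needed, replacing $X$ by an interval containing its support), Lemma~\ref{lem: left monotone structure} gives $\ka_{y_i}^{XZ}(x,dz)=\lambda_-^i(x)\,\delta_{T_-^i(x)}+\lambda_+^i(x)\,\delta_{T_+^i(x)}$ for $\ka_{y_i}^X$--a.e.\ $x$, with $T_-^i(x)\in Z_-$, $T_+^i(x)\in Z_+$, and the weights $\lambda_\pm^i(x)\in(0,1)$ uniquely determined by $T_\pm^i(x)$ through $\lambda_-^iT_-^i+\lambda_+^iT_+^i=y_i$, $\lambda_-^i+\lambda_+^i=1$.

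Finally I establish uniqueness. Suppose $\pi_0,\pi_1$ are both optimal; by linearity of $\pi\mapsto\int c\,d\pi$ and convexity of $\PiM(\mu_X,\mu_Y,\mu_Z)$ the midpoint $\pi_{1/2}:=\tfrac12(\pi_0+\pi_1)$ is also optimal, and since $\mu_Y$ is the common $y$--marginal its conditionals satisfy $\ka_{y_i}^{XZ}(\pi_{1/2})=\tfrac12\big(\ka_{y_i}^{XZ}(\pi_0)+\ka_{y_i}^{XZ}(\pi_1)\big)$. Disintegrating in $x$ (the $X$--marginal $\ka_{y_i}^X$ is common to all three) and applying the previous step to $\pi_0$, $\pi_1$ and $\pi_{1/2}$: for $\ka_{y_i}^X$--a.e.\ $x$ the fibre of $\ka_{y_i}^{XZ}(\pi_{1/2})$ over $x$ is supported on exactly one point of $Z_-$ and one of $Z_+$, yet it is also the average of the two--point fibres of $\pi_0$ and $\pi_1$; hence the $Z_-$--points agree, $T_-^{i,0}(x)=T_-^{i,1}(x)$, and likewise $T_+^{i,0}(x)=T_+^{i,1}(x)$. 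The weights then also coincide, being determined by the common $T_\pm^i(x)$ and $y_i$, so $\ka_{y_i}^{XZ}(\pi_0)=\ka_{y_i}^{XZ}(\pi_1)$ for $i=1,2$, and therefore $\pi_0=\pi_1$.

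I expect the localization to be the crux: one must check carefully that swapping out a single conditional kernel $\ka_{y_i}^{XZ}$ leaves \emph{both} martingale constraints intact --- the constraint $\E[Y\mid X]=X$, which is really a statement about $\Proj_{XY}$ and hence untouched, and $\E[Z\mid X,Y]=Y$, preserved precisely because the competitor is drawn from $\Pibar{F_{y_i},\ka_{y_i}^X,\nu_i}$ --- and that the hypotheses of Theorem~\ref{thm: structure of optimizer in three period limit problem} and Lemma~\ref{lem: left monotone structure} genuinely hold for the reduced data $(\ka_{y_i}^X,\nu_i)$: non-atomicity of $\ka_{y_i}^X$ (inherited from $\mu_X$) and the barycenter $y_i$ lying strictly between $\overline{z_-}$ and $\underline{z_+}$ (which is hypothesized). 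The remaining ``average of two graphs'' argument is exactly the one already used in the proof of Theorem~\ref{thm: left monotone uniqueness}.
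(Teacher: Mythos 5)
Your proposal is correct and follows essentially the same route as the paper's proof: Lemma \ref{lem:singleton} pins down $\pi^{XY}$, the conditional couplings given $y_i$ are shown to be optimal for the fixed-barycenter problem \eqref{eqn: fixed barycenter problem} with cost $c_3$, Theorem \ref{thm: structure of optimizer in three period limit problem} and Lemma \ref{lem: left monotone structure} yield the two-graph structure, and the midpoint/averaging argument of Theorem \ref{thm: left monotone uniqueness} closes the uniqueness. Your write-up merely makes explicit the localization step (that swapping one conditional kernel preserves both martingale constraints and changes only the $c_3$ term) which the paper dismisses as straightforward.
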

\begin{proof}
	By Lemma \ref{lem:singleton}, 
	the martingale coupling $\pi^{XY}$ between $\mu_X$ and $\mu_Y$ is unique.  Uniqueness of the optimal $\pi$ will then follow if we can establish uniqueness of an optimal coupling between $\pi^{XY}$ and $\mu_Z$.

It is straight forward to see that, conditioning the optimizer $\pi=\mu_Y\otimes \ka^{XZ}_Y$ on $y$, the conditional coupling $\ka^{XZ}_Y(y,dxdz)$ must be optimal between its marginals $\ka^{X}_Y(y,dx)$ and $\ka^{Z}_Y(y,dz)$in \eqref{eqn: fixed barycenter problem} for cost $c_3(x,z)$ for each of $y_0$ and $y_1$.  Clearly $\ka^{X}_Y(y_0,dx)$ and $\ka^{X}_Y(y_1,dx)$ must both be non-atomic, so that Lemma \ref{lem: left monotone structure} implies $\ka^{XZ}_Y(y_i,dx)$ concentrates on two graphs, $T^i_+:X \rightarrow Z_+$ and $T^i_-:X \rightarrow Z_-$.  Therefore, any optimal $\pi$ concentrates on two graphs $T_+:X\times Y \rightarrow Z_+$ and $T_-:X\times Y \rightarrow Z_-$ over $(x,y)$.  The proof of uniqueness is then essentially identical to the proof of uniqueness in Theorem \ref{thm: structure of optimizer in three period limit problem}.

\end{proof}

\subsection{Uniqueness of the optimal coupling for $|\tsp(\mu_Y)| = 3$}


Throughout this subsection, we will make the following assumptions:

\begin{enumerate}
    \item[A1] $X=[\underline{x},\overline{x}]$ and $\mu_X$ is absolutely continuous with respect to Lebesgue measure, with density $\frac{d\mu_X}{dx}$.
    \item[A2] $\mu_Y =\sum_{i=0}^2a_i\delta_{y_i}$ is supported on the three points $y_0,y_1$ and $y_2$.
    \item[A3] $Z=Z_- \cup Z_+$ is the union of two intervals $Z_- =[\underline{z_-}, \overline{z_-}]$, and $Z_+ =[\underline{z_+}, \overline{z_+}]$ with  $\overline{z_-} < y_0<y_1<y_2 <\underline{z_+}$ and  $\mu_Z$ is non-atomic. 
    \item[A4] The cost function takes the form $c(x,y,z) = f(x,y)z^2$, where $f$ is a differentiable function  with $\partial_xf,\partial_yf<0$.
\end{enumerate}


We will also use the following notation: for a given $\pi \in \PiM(\mu_x,\mu_y,\mu_Z)$,  $\nu =f_\#\pi$ will denote the distribution of $w=f(x,y) \in W:=f(X,Y)$, and $\gamma =\big((x,y,z) \rightarrow (f,z)\big)_\#\pi$ will denote the coupling between $\nu$ and $\mu_Z$ induced by $\pi$.
\begin{lemma}\label{lem: full support optimality}
Assume A1-A4 and suppose $\pi$ is optimal in the three period MOT \eqref{prob:Primal-multi-period-MOT}.  There there is an optimal coupling $\tilde \pi$ such that for all $(x,y,z),(x',y',z),  
    \in spt(\pi)$ and $f(x,y)=f(x',y')$  $(x',y', z) \in spt(\tilde \pi)$.
\end{lemma}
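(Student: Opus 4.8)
The plan is to exploit the very special product structure of the cost, $c(x,y,z)=f(x,y)z^2$, which implies that the value of $\int c\,d\pi$ depends on $\pi$ only through the coupling $\gamma$ between the pushforward measure $\nu=f_\#\pi$ on $W=f(X,Y)$ and $\mu_Z$, since $\int c\,d\pi=\int w\,z^2\,d\gamma(w,z)$. The key observation is that the martingale constraints impose essentially nothing on how mass is allocated among points $(x,y)$ and $(x',y')$ with $f(x,y)=f(x',y')$: the martingale condition in the $z$-variable (given $x,y$) is a constraint on the conditional law of $z$, and the martingale condition in the $y$-variable (given $x$) is a constraint that only sees the $Y$-marginal structure, not the value of $f$. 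So I would aim to show that one can redistribute mass along level sets of $f$ without changing the cost, while preserving all the marginal and martingale constraints, and in particular move to an optimal $\tilde\pi$ whose support is ``saturated'' along level sets of $f$ in the sense stated.

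First I would fix an optimal $\pi$ and disintegrate it as $\pi=\mu_X\otimes\kappa_X^{YZ}$, writing $\kappa_X^{YZ}(x,dy,dz)=\sum_{i=0}^2 g_i(x)\,\delta_{y_i}(dy)\otimes\lambda_{x,i}(dz)$, where $g_i(x)\geq 0$, $\sum_i g_i(x)=1$, $\sum_i g_i(x)y_i=x$ (the $Y$-martingale condition given $x$), and $\int z\,\lambda_{x,i}(dz)=y_i$ (the $Z$-martingale condition given $(x,y_i)$). The cost is $\int\big(\sum_i g_i(x)f(x,y_i)\int z^2\,\lambda_{x,i}(dz)\big)d\mu_X(x)$. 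Now for the redistribution: given two points $(x,y_j)$ and $(x',y_k)$ with $f(x,y_j)=f(x',y_k)$ both in $\mathrm{spt}(\pi)$, I want to argue that, up to passing to another optimizer, the triple $(x',y_j,z)$ lies in the support for every $z$ with $(x,y_j,z)\in\mathrm{spt}(\pi)$. The mechanism is to build $\tilde\pi$ as a suitable mixture/average: concretely, I would consider averaging $\pi$ with a ``swapped'' copy in which, on a small set of $x$-values near $x$ and $x'$, the conditional laws $\lambda_{x,j}$ and $\lambda_{x',k}$ are interchanged or blended. Because $f(x,y_j)=f(x',y_k)$, such a swap changes $\gamma$ not at all — the mass still sits at the same $w$-value paired with the same $z$'s — hence the cost is unchanged and $\tilde\pi$ remains optimal; and because the swap respects the three constraint families (it only permutes conditional $z$-laws among points with equal $f$-value, keeping each $\int z\,d\lambda=y_i$, keeping the $g_i$ and hence the $Y$-martingale and $X$-marginal, and keeping the $Z$-marginal since $\gamma$'s $Z$-marginal is unchanged), it stays admissible. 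Iterating/taking a limit over all such pairs (there are only finitely many values $y_i$, and $W$ is the continuous image of a compact set, so a measurable selection/exhaustion argument applies) yields a single optimal $\tilde\pi$ for which the support is closed under the stated operation.

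The main obstacle I anticipate is the bookkeeping needed to make the ``swap along a level set'' rigorous when the level sets of $f$ are genuinely two-dimensional curves in $X\times Y$ — here $Y$ is discrete with three atoms, so a level set $\{f=w\}$ meets each horizontal line $Y=y_i$ in a set of $x$'s, and I need the conditional $z$-laws to be measurable in $x$ and to be able to transport mass between the fibre over $(x,y_j)$ and the fibre over $(x',y_k)$ coherently. The cleanest route is probably not a pointwise swap but a convexity argument: show that the set of optimizers is convex and weakly compact (standard, since the constraints are linear and closed and $c$ is continuous on a compact set), observe that the map $\pi\mapsto\gamma$ is affine and that the cost factors through $\gamma$, so the fibre of optimizers over a fixed optimal $\gamma$ is a nonempty convex weakly compact set, and then within that fibre run a ``maximal support'' argument — pick $\tilde\pi$ maximizing, say, $\int\phi\,d\pi$ for a strictly positive continuous $\phi$ weighting the $X\times Y$ marginal, or simply take a countable convex combination of optimizers realizing all the required support inclusions (each individual inclusion is achieved by an explicit admissible optimizer obtained from $\pi$ by relocating an appropriate sliver of mass from $(x,\cdot)$ to $(x',\cdot)$, which is legitimate precisely because $f(x,y_j)=f(x',y_k)$ and the $g_i$-profile can be adjusted to keep the $X$-marginal and $Y$-martingale intact). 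I would present the convex-combination version as the main line and mention the maximal-support alternative as a remark.
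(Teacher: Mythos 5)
Your starting point matches the paper's: the cost factors through the coupling $\gamma$ between $\nu=f_\#\pi$ and $\mu_Z$, so it suffices to modify $\pi$ along level sets of $f$ while keeping $\pi^{XY}$ and $\gamma$ fixed. But the redistribution mechanism you propose does not work as described. If $f(x,y_j)=f(x',y_k)$ with $j\neq k$, then interchanging (or blending) the conditional laws $\lambda_{x,j}$ and $\lambda_{x',k}$ produces a conditional law at $(x,y_j)$ whose barycenter is $y_k$ (or a convex combination of $y_j$ and $y_k$), not $y_j$; your claim that the swap ``keeps each $\int z\,d\lambda=y_i$'' is exactly what fails, so the resulting measure is not a martingale. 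The same problem defeats the fallback of ``relocating a sliver of mass from $(x,\cdot)$ to $(x',\cdot)$'': the equality $f(x,y_j)=f(x',y_k)$ only guarantees that the cost is unchanged, not that the conditional barycenter constraints survive, and you never explain how to compensate. The maximal-support alternative does not rescue this: maximizing $\int\phi\,d\pi$ over probability measures with $\phi>0$ does not favor large supports, and a countable convex combination cannot cover the uncountably many level sets $w\in W$ unless each individual support inclusion is first realized by some admissible optimizer --- which is precisely the point at issue.

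The missing ingredient is where assumption A3 enters, and it is the heart of the paper's argument. Disintegrating with respect to $\nu$, the conditional $XY$-law at $w$ is $\sum_i\alpha_i\delta_{(x_i,y_i)}$ and the conditional $Z$-law splits as $\beta_-\ka^{Z-}_W+\beta_+\ka^{Z+}_W$ over the two intervals $Z_\pm$, with barycenters $E_\pm$ satisfying $E_-<y_i<E_+$ for every $i$. One then assigns to each $(x_i,y_i)$ the measure $\lambda^i_-\ka^{Z-}_W+\lambda^i_+\ka^{Z+}_W$, where $\lambda^i_\pm>0$ are the unique weights with $\lambda^i_-E_-+\lambda^i_+E_+=y_i$: this restores the conditional barycenter at every point of the level set and yields full product support. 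The consistency check $\sum_i\alpha_i\lambda^i_\pm=\beta_\pm$ (so that the $Z$-marginal, hence $\gamma$, is unchanged) then follows from the martingale identity $\beta_-E_-+\beta_+E_+=\sum_i\alpha_i y_i$ together with uniqueness of convex representations with respect to two points of $\mathbb{R}$. Without this reweighting step --- which uses the two-interval structure of $Z$ in an essential way --- the lemma is not proved.
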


\begin{proof}
The cost only depends on the coupling $\gamma$ between $\nu$ and $\mu_Z$.  It therefore suffices to construct a martingale measure $\tilde \pi \in \PiM(\mu_X,\mu_Y,\mu_Z)$ with $\tilde \pi^{XY} = \pi^{XY}$ (and consequently $\tilde \nu=\nu$) and $\tilde \gamma =\gamma$ with the desired property.

Disintegrate $\pi^{XY} =\nu\otimes\ka^{XY}_W$ and $\gamma =\nu\otimes \ka^Z_W$ with respect to $\nu$; we will work below conditional on $w=f(x,y)$.  We need only to find a martingale coupling $\tilde \ka^{XYZ}_W$ between each conditional probability $\ka^{XY}_W$ and $ \ka^Z_W$, which will ensure that the resulting $\tilde \pi =\nu \otimes \tilde \ka^{XYZ}_W \in \PiM(\mu_X,\mu_Y,\mu_Z)$, such that $\tsp(\tilde \ka^{XYZ}_W(w,dxdydz) = \tsp(\tilde \ka^{XY}_W(w,dxdy)) \times \tsp(\tilde \ka^{z}_W(w,dz))$.

Since $\mu_Y$ is supported on three points, $y_0, y_1,y_2$, we have $\ka^{XY}_W(w,dxdy) =\sum_{i=0}^2 \alpha_i \delta_{(x_i,y_i)}$, where each $f(x_i,y_i)=w$.  The structure of $Z$ implies that each $\ka^Z_W(w,dz) =\beta_+\ka^{Z+}_W(w,dz) +\beta_-\ka^{Z-}_W(w,dz) $ where $\ka^{Z\pm}_W(w,dz) \in \cP(Z_\pm) $ and $\beta_+ +\beta_-=1$.  Clearly, setting $E_\pm = \int_{Z_\pm} z\ka^{Z\pm}_W(w,dz)$, we have each $y_i \in (E_-,E_+)$, and so there exist $\lambda^i_\pm>0$ with $\lambda^i_-+\lambda^i_+=1$ such that $y_i = \lambda^i_-E_-+\lambda^i_+E_+$.  We then build the conditional probability $\tilde \ka^{XYZ}_W =\sum_{i=0}^2 \alpha_i \delta_{(x_i,y_i)} \otimes(\lambda^i_- \ka^{Z_-}_W +\lambda^i_+\ka^{Z_+}_W) $.  By construction, this yields a martingale coupling.  We only need to check that it has the correct $Z$ marginal, $\ka_W^Z$.  We do this by summing over the three values of $i$ to get 
\begin{eqnarray*}
\sum_{i=0}^2\alpha_i(\lambda^i_- \ka^{Z-}_W  +\lambda^i_+\ka^{Z+}_W)&=& (\sum_{i=0}^2\alpha_i\lambda^i_- )\ka^{Z-}_W  +(\sum_{i=0}^2\alpha_i\lambda^i_+)\ka^{Z+}_W
\end{eqnarray*}
So, we need to show $\sum_{i=0}^2\alpha_i\lambda^i_\pm = \beta_\pm .$ The martingale condition for the original conditional coupling $\ka^{XYZ}_W$ yields
\begin{eqnarray*}
    \beta_+E_+ +\beta_-E_- &=&\sum_{i=0}^2\alpha_iy_i\\
    &=&\sum_{i=0}^2\alpha_i(\lambda^i_-E_-+\lambda^i_+E_+)=\sum_{i=0}^2\alpha_i(\lambda^i_-)E_-+\sum_{i=0}^2\alpha_i(\lambda^i_+E_+)
\end{eqnarray*}
which, by uniqueness of decompositions into convex combinations of two point in $\mathbb{R}$, yields the desired result.
\end{proof}








\begin{lemma}\label{lem: graphical stucture of reduced coupling}
Assume A1-A4 and suppose $\pi$ is optimal in the three period MOT \eqref{prob:Primal-multi-period-MOT}.  Then $\gamma = (f,z)_\#\pi$ 
is concentrated on a graph over $Z$.
\end{lemma}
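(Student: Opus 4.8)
The goal is to show that $\gamma = (f,z)_\#\pi$ is concentrated on a graph over $Z$, i.e. that for $\mu_Z$-a.e. $z$, there is a single value $w = f(x,y)$ to which $z$ is coupled. The plan is to argue by contradiction using the cost structure $c(x,y,z) = f(x,y)z^2$, which means $\int c \, d\pi = \int w z^2 \, d\gamma$, so $\gamma$ must be optimal for the cost $wz^2$ among all couplings in $\Pi(\nu, \mu_Z)$ that arise from some martingale $\pi$ — but in fact, by Lemma \ref{lem: full support optimality} and the reduction there, the relevant constraint on $\gamma$ beyond being a coupling of $\nu$ and $\mu_Z$ is essentially a martingale-type (barycenter) constraint, since each $w$-fiber must satisfy the pointwise martingale condition $y_i = \int z \, \kappa(w,dz)$ for the relevant $y_i$'s. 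Since $\partial_x f, \partial_y f < 0$, larger $w$ corresponds (loosely) to smaller $(x,y)$; the key analytic fact is that $wz^2$ is supermodular in $(w, z^2)$ — or more precisely, that for the problem of coupling $w$ with $z$, the cost $wz^2$ has a definite monotonicity structure in $z^2$. Minimizing $\int w z^2 \, d\gamma$ pushes large $w$ to be paired with small $z^2$ (i.e. $z$ near the barycenter) and small $w$ with large $z^2$.

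The concrete steps I would carry out: First, reduce to showing that $\gamma$ is $(c, W')$-monotone for the appropriate cost and constraint set, invoking Theorem 3.6 of \cite{zaev2015} exactly as in the proof of Theorem \ref{thm: structure of optimizer in three period limit problem}; the linear constraints here encode the barycenter conditions coming from the martingale property. Second, suppose for contradiction that some $z_0$ is coupled to two distinct values $w^- < w^+$, i.e. $(w^-, z_0), (w^+, z_0) \in \tsp(\gamma)$. Third, use the barycenter constraints to locate, for each of $w^-$ and $w^+$, other support points: since each $w$-fiber of the original $\pi$ distributes mass across $Z_-$ and $Z_+$ with prescribed barycenters $y_i$, and since $z_0$ lies in one of $Z_-, Z_+$, I can find companion points $(w^-, z_1)$ and $(w^+, z_2)$ with $z_1, z_2$ in the opposite interval, set up a competitor measure $\beta$ supported on these points and a rearrangement $\alpha \sim_{W'} \beta$ that swaps which $w$ is paired with $z_0$ versus the large-$|z|$ points, and compute $\int c \, d\alpha - \int c \, d\beta$ to get a strict sign from the $wz^2$ structure, contradicting $(c,W')$-monotonicity. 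Fourth, conclude that away from a $\mu_Z$-null set each $z$ has a unique partner $w$, which is exactly the graph statement.

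The main obstacle I anticipate is getting the competitor construction and the admissibility condition $\alpha \sim_{W'} \beta$ right: I need the swapped measure to still satisfy the barycenter/martingale-type constraints (so that it is a legitimate competitor in the sense of \cite{zaev2015}), and simultaneously respect the two-interval structure of $Z$ and the fact that the $y_i$ all lie strictly between $Z_-$ and $Z_+$. Handling the bookkeeping of which interval $z_0$ falls into, and ensuring the companion points can always be found with the right barycenter weights (using that each $y_i \in (E_-, E_+)$ as in the proof of Lemma \ref{lem: full support optimality}), is the delicate part; once the competitor is in hand, the cost comparison is a short computation exploiting strict convexity/monotonicity of $z \mapsto z^2$ together with $w^- < w^+$. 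A secondary point requiring care is that "graph over $Z$" must be interpreted $\mu_Z$-a.e., so I should phrase the contradiction in terms of a positive-measure set of bad $z$'s, or equivalently show the bad set is at most countable by the same argument used in Lemma \ref{lem: left monotone structure} (only countably many disjoint intervals of $Z$ can be avoided), invoking non-atomicity of $\mu_Z$.
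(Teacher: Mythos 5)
Your overall strategy matches the paper's: invoke Lemma \ref{lem: full support optimality} to make the support of the reduced coupling amenable to the $(c,W)$-monotonicity criterion of \cite{zaev2015}, then exploit the structure of the cost $wz^2$ via a finite swap, and finally use non-atomicity of $\mu_Z$ to dispose of ties. The organizational difference is that the paper does \emph{not} argue directly over $Z$: it first establishes left-monotonicity of $\gamma$ in the $(w,z)$ variables (by the same swap as in Theorem \ref{thm: structure of optimizer in three period limit problem}), then applies the Lemma \ref{lem: left monotone structure} argument to conclude that $\gamma$ concentrates on two monotone graphs $T_\pm : W \to Z_\pm$ (with $T_+$ increasing, $T_-$ decreasing), and only then inverts these injective maps to obtain the graph $G:Z\to W$. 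Your route tries to contradict directly the existence of two fibers $w^-<w^+$ sharing a point $z_0$.

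The one genuine soft spot in your plan is exactly at that direct contradiction. If the two fibers share \emph{both} support points --- i.e. the companion points satisfy $z_1=z_2$, so that $w^-$ and $w^+$ are each coupled to the identical pair $\{z_0,z_1\}$ with the same barycenter weights --- then every admissible competitor $\alpha\sim_{W'}\beta$ coincides with $\beta$ in cost, and no strict sign is available from the $wz^2$ structure. This configuration is not excluded by optimality or monotonicity at all; it is excluded only because a positive-$\nu$-mass set of fibers mapping onto the same $z_0$ would force $\mu_Z$ to have an atom at $z_0$, contradicting A3. You relegate non-atomicity to a ``secondary point,'' but it is load-bearing: the swap argument yields strict monotonicity of the fiber maps only off the tie set, and the tie set is killed by non-atomicity, not by the cost comparison. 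If you restructure the argument as (i) swap $\Rightarrow$ left-monotonicity, (ii) left-monotonicity plus the two-interval structure of $Z$ and the barycenter condition $\Rightarrow$ two monotone graphs over $W$, (iii) monotonicity plus non-atomicity of $\mu_Z$ $\Rightarrow$ invertibility $\mu_Z$-a.e., you recover the paper's proof and avoid the degenerate case entirely. A minor further point: for $c(w,z)=wz^2$ one has $\partial_w c=z^2$ strictly \emph{convex} in $z$, so the Spence--Mirrlees orientation is reversed relative to Theorem \ref{thm: structure of optimizer in three period limit problem}; the swap still works but produces the mirror-image monotonicity, which you should track when asserting which of $T_\pm$ is increasing.
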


\begin{proof}
   Using Lemma \ref{lem: full support optimality}, we get that the support of $\tilde \pi$ satisfies the $(c,W)$ optimality condition; very similar arguments to Theorem \ref{thm: structure of optimizer in three period limit problem} and Theorem \ref{thm: left monotone uniqueness} yield that $\gamma$ is left monotone and consequently concentrated on the union of two graphs, $T_+:W \rightarrow Z_+$ increasing and $T_-:W \rightarrow Z_-$.  Consequently, it concentrates on the graph of $G:Z \rightarrow W$, defined by $G_{Z_\pm} =T_\pm^{-1}$. 
\end{proof}

\begin{theorem}\label{thm: uniqueness for triply supported y}
Under assumptions assumption A1-A4, the solution to the three period MOT problem is unique.
\end{theorem}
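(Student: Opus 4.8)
The plan is to combine the structural results already established—Lemma \ref{lem: full support optimality} and Lemma \ref{lem: graphical stucture of reduced coupling}—with a convexity/linearity argument analogous to the one used in Theorem \ref{thm: left monotone uniqueness} and Theorem \ref{discMarUniq}. The key observation is that the cost $c(x,y,z)=f(x,y)z^2$ depends on $\pi$ only through the coupling $\gamma = (f,z)_\#\pi$ between $\nu = f_\#\pi$ and $\mu_Z$; moreover $\nu$ itself is determined by $\pi^{XY}$, which in turn—because $\mu_Y$ is supported on three points $y_0,y_1,y_2$—is \emph{uniquely} determined by $\mu_X$ via the martingale condition (the disintegration $\ka_X^Y(x,\cdot)$ must be supported on $\{y_0,y_1,y_2\}$, and the three weights $g_0(x),g_1(x),g_2(x)$ are pinned down by $\sum g_i(x)=1$, $\sum y_i g_i(x)=x$, together with the $(c,W)$-optimality condition forcing a left-monotone—hence graphical—structure in the $(x,y)$ variables; this reduces to the two-point argument of Lemma \ref{lem:singleton} on the region where only two of the $y_i$ are used, and the optimality condition selects the weights elsewhere). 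So $\nu$ is fixed across all optimizers, and uniqueness of $\pi$ reduces to uniqueness of $\gamma$ plus uniqueness of the lift of $\gamma$ back to a measure on $X\times Y\times Z$.

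First I would argue uniqueness of $\gamma$. By Lemma \ref{lem: graphical stucture of reduced coupling}, every optimal $\pi$ induces a $\gamma$ concentrated on the graph of some map $G:Z\to W$ (built from the two monotone branches $T_\pm$). If $\gamma_0,\gamma_1$ arise from two optimizers $\pi_0,\pi_1$, then by linearity of the objective in $\pi$ the average $\pi_{1/2}=\tfrac12(\pi_0+\pi_1)$ is again optimal, so its induced $\gamma_{1/2}=\tfrac12(\gamma_0+\gamma_1)$ must also be concentrated on a graph over $Z$; since $\gamma_{1/2}$ is supported on the union of the graphs of $G_0$ and $G_1$, graphicality forces $G_0=G_1$ $\mu_Z$-a.e., hence $\gamma_0=\gamma_1=:\gamma$. (Here I use that $\mu_Z$ is non-atomic, so the set of $z$ at which $G_0\neq G_1$ cannot carry positive mass while keeping the average graphical, exactly as in the proof of Theorem \ref{thm: left monotone uniqueness}.)

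Next I would recover $\pi$ from $\gamma$ and $\pi^{XY}$. Disintegrate $\pi^{XY} = \nu\otimes \ka^{XY}_W$ and $\gamma = \nu\otimes\ka^Z_W$ with respect to $\nu$, and work conditionally on $w=f(x,y)$. Since $\ka^{XY}_W(w,\cdot)=\sum_{i} \alpha_i(w)\delta_{(x_i,y_i)}$ is supported on (at most) three atoms with $f(x_i,y_i)=w$, and $\ka^Z_W(w,\cdot)$ splits as $\beta_+\ka^{Z_+}_W+\beta_-\ka^{Z_-}_W$ with barycenters $E_\pm$ on either side of all the $y_i$, the martingale lift $\tilde\ka^{XYZ}_W$ must send each atom $(x_i,y_i)$ to a convex combination $\lambda^i_-\ka^{Z_-}_W+\lambda^i_+\ka^{Z_+}_W$ with $y_i=\lambda^i_-E_-+\lambda^i_+E_+$; this determines $\lambda^i_\pm$ uniquely (uniqueness of a convex combination of two reals), and the constraint of matching the $Z$-marginal $\ka^Z_W$ is then automatic by the computation already carried out in the proof of Lemma \ref{lem: full support optimality}. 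Hence there is exactly one lift, so $\pi$ is unique.

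The main obstacle I anticipate is the first step: rigorously showing that $\pi^{XY}$ (and hence $\nu$) is the \emph{same} for all optimizers. When all three atoms $y_0,y_1,y_2$ are genuinely used, the martingale equations alone do not pin down the weights $g_i(x)$, so one must invoke the $(c,W)$-monotonicity of the optimal support in the $(x,y)$ variables—using $\partial_x f<0,\partial_y f<0$ and the $z^2$ factor—to force a left-monotone, graphical selection there, and then an averaging argument as above to conclude uniqueness of that selection. Care is needed because the relevant competitor measures must respect \emph{both} martingale constraints (the $(x,y)$ one and the $(y,z)$ one); I would handle this by first fixing the optimal $\gamma$-structure from Lemma \ref{lem: graphical stucture of reduced coupling} and only then arguing about the $(x,y)$ layer, so that the two layers are decoupled and each uniqueness statement is a clean one-dimensional monotone-coupling uniqueness.
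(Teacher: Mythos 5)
Your second and third steps (uniqueness of $\gamma$ by averaging two graphical couplings over $Z$, and uniqueness of the martingale lift once $\pi^{XY}$ and $\gamma$ are fixed) follow the paper's argument closely and are essentially correct; note that the averaging argument already yields $\nu_0=(G_0)_\#\mu_Z=(G_1)_\#\mu_Z=\nu_1$ on its own, so you do not need uniqueness of $\pi^{XY}$ in order to conclude uniqueness of $\nu$ --- the paper runs the logic in the opposite order from yours, deducing $\nu$ first and $\pi^{XY}$ afterwards. (A small imprecision in your step 3: the claim that each atom $(x_i,y_i)$ must be sent to a combination $\lambda^i_-\ka^{Z_-}_W+\lambda^i_+\ka^{Z_+}_W$ is not automatic for a general lift; it follows here because the graphical structure of $\gamma$ makes $\ka^Z_W(w,\cdot)$ supported on the two points $T_\pm(w)$, after which the martingale condition pins down the weights.)

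The genuine gap is your first step, the uniqueness of $\pi^{XY}$. You propose to supply the missing third equation for the weights $g_i(x)$ by arguing that $(c,W)$-monotonicity forces a ``left-monotone, hence graphical'' structure in the $(x,y)$ variables. This cannot work: since $\mu_X$ is non-atomic and $\mu_Y$ is supported on three points with $\mu_X\co\mu_Y$, no coupling in $\PiM(\mu_X,\mu_Y)$ can be concentrated on a graph over $x$ (that would force $y=x$ for $\mu_X$-a.e.\ $x$), and there is no reason the optimizer's conditionals $\ka_X^Y(x,\cdot)$ should charge only two of the three atoms --- the paper's own proof explicitly allows all three weights $q_0(x),q_1(x),q_2(x)$ to be positive, and the cost $f(x,y)z^2$ provides no Spence--Mirrlees-type structure in the $(x,y)$ pair that would yield a no-crossing or two-point selection there. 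The paper closes this step differently: having already established that $\nu=G_\#\mu_Z$ is the same for all optimizers, it uses the change-of-variables identity $\frac{d\nu}{dw}=\sum_i q_i(x_i)\,\frac{d\mu_X}{dx}(x_i)\big/\frac{\partial f}{\partial x}(x_i,y_i)$ over the at most three preimages $(x_i,y_i)$ of each level $w$, determines $q_0$ on the range of $w$ having a single preimage $(x_0,y_0)$, and then bootstraps in $x$ (using $\partial_x f,\partial_y f<0$ to order the preimages) to determine all three weights everywhere. Without this identity, or some substitute third constraint, your argument for the uniqueness of $\pi^{XY}$ does not close.
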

\begin{proof}
The first part of the proof is a fairly standard application of the graphical structure.  Suppose that $\pi_0$ and $\pi_1$ are both solutions.  Then, by linearity, so is $\pi_{1/2} =\frac{1}{2}[\pi_0+\pi_1]$.  By Lemma 
\ref{lem: graphical stucture of reduced coupling}, the corresponding distributions $\nu_0$ and $\nu_1$ of $w=f(x,y)$ must be coupled to $\mu_Z$ by graphs $G_0$ and $G_1$ respectively; that is, $\gamma_i(w,z) = (Id,G_i)_\#\mu_Z$.  Similarly, since $\pi_{1/2}$ is optimal, the distributions $\nu_{1/2} =\frac{1}{2}[\nu_0 +\nu_1]$ must be coupled to $\mu_Z$ by a graph $G_{1/2}$, $\gamma_{1/2}(w,z) = (Id,G_{1/2})_\#\mu_Z$. Since the coupling $\gamma_{1/2} = \frac{1}{2}[\gamma_0+\gamma_1]$ concentrates on the \emph{union} of the graphs of $G_0$ and $G_1$, this is only possible if $G_0=G_1 =G_{1/2}$.  We then must have 
$$
\nu_0 =(G_0)_\#\mu_Z = (G_1)_\#\mu_Z = \nu_1
$$
It remains to show that the distribution $\nu$ uniquely determines the coupling $\pi$.  In fact, since the coupling between $\nu$ and $\mu_Z$ is uniquely determined by the argument above, 
we must only show: 
\begin{enumerate}
    \item That the $(x,y)$ marginal $\pi^{XY}$ is uniquely determined by $\nu$; that is, that for a given $\nu$, there is a unique $\pi^{XY} \in \PiM(\mu_X,\mu_Y)$ such that $\nu = f_\#\pi^{XY}$, and;
    \item That the coupling $\pi$ is uniquely determined by $\pi^{XY}$ and the coupling $\gamma$ between $\nu = f_\#\pi^{XY}$ and $\mu_Z$.
\end{enumerate} 

Part 2 above follows fairly easily from the structure of $\gamma$.  Indeed, it is enough to show uniqueness of the condition probabilities $\ka_W^{XYZ}(w, dxdydz)$ of $\pi=\nu\otimes\ka_W^{XYZ}$ coupling the conditional probabilities $\ka_W^{XY}(w, dxdy)$ of $\pi^{XY}=\nu\otimes\ka_W^{XY}$ and $\ka_W^{Z}(w, dxdy)$ of $\mu_Z{}=\nu\otimes\ka_W^{Z}$ for $\nu$ a.e $w$.  Disintegrating with respect to $(x,y)$, $\ka_W^{XYZ} =\ka_W^{XY}\otimes \ka_{WXY}^{Z}$, the proof of Lemma \ref{lem: graphical stucture of reduced coupling} implies that each $\ka_{WXY}^{Z}(x,y,w=f(x,y), dz) =\alpha_-\delta_{T_-(f(x,y))}+\alpha_+\delta_{T_+(f(x,y))}$ is supported on the two points $T_\pm (f(x,y))$.  Now, the martingale constraint requires $\int z \ka_{WXY}^{Z}(x,yw=f(x,y), dz) =y$, which uniquely determines the weights $\alpha_\pm$

Part 1 is more involved; we turn to this task now.

Since $\mu_X$ is absolutely continuous and $\mu_Y$ is supported on three points $\{y_1, y_2, y_3\}$, 
the disintegration of $\pi^{XY} =\mu_X \otimes \ka_X^Y$ can be written as:
\[
\ka^Y_X = q_0(x) \delta_{(x, y_0)} + q_1(x) \delta_{(x, y_1)} + q_2(x) \delta_{(x, y_2)},
\]
where $q_0(x), q_1(x), q_2(x)$ are non-negative weights satisfying the constraints:
\be\label{eqn: three point conditional probability}
q_0(x) + q_1(x) + q_2(x) = 1, \quad
q_0(x)y_0 + q_1(x)y_1 + q_2(x)y_2 = x .
\ee

Now note that $\nu$ has support contained in $W=[\underline{x} +y_0, \overline{x} +y_2]$.  For all $w$ there are at most three points $(x_i,y_i)$ such that $f(x_i,y_i) =w$, and for $w >f(\underline{x},y_1)$, there is only one such point, $(x_0,y_0)$.  For these $w$, the change of variables equation between $\mu_X$ and $\nu$ reads 
$$
\frac{d\nu}{dw} = \frac{q_0(x_0)\frac{d\mu_X}{dx}(x_0)}{\frac{\partial f}{\partial x}(x_0,y_0)} 
$$
which then uniquely determines $q_0(x_0) =\frac{\frac{d\nu}{dw}(w)\frac{\partial f}{\partial x}(x_0,y_0) }{\frac{d\mu_X}{dx}(x_0)}$.  Inserting this into \eqref{eqn: three point conditional probability} then determines $q_1(x_0)=\frac{y_2-x-q_0(x_0)(y_2-y_0)}{y_2-y_1}$ and $q_2(x_0)=\frac{y_1-x-q_0(x_0)(y_1-y_0)}{y_1-y_2}$ as well, for all $x_0$ such $f(x_0,y_0) <f(\underline{x},y_1)$.  

For other $w$, there are at most three points $(x_i,y_i)$ such that $f(x_i,y_i) =w$, in which case our change of variables formula reads given by $\frac{d\nu}{dw} = \sum_{i=0}^2\frac{q_i(x_i)\frac{d\mu_X}{dx}(x_i)}{\frac{\partial f}{\partial x}(x_i,y_i)} $.  This equation may be solved for $q_0(x_0)$ in terms of $q_1(x_1)$ and $q_2(x_2)$. Noting that $x_1,x_2 < x_0$, we may therefore boot strap to determine $q_0(x)$ for larger values of $x$ using the solutions for smaller ones; a precise argument is as follows.

Suppose by way of contradiction that there exists some $x$ such that the $q_i$ are not uniquely determined at $x$.  We let $x_u$ be the infimum of the set of such
$x$.  Note that $f(x_u,y_0)\leq f(\underline x, y_1)$.  We can choose $x_0\geq x_u$ close enough to $x_u$ such that the $q_i$ are not uniquely determined at $x_0$, but for $f(x_i,y_i) =f(x_0,y_0)$ for $i=1$, and possibly $2$ as well, we  have $x_i <x_u$, so the $q_i$ are uniquely determined at $x_1$ and $x_2$. The change of varibles equation $\frac{d\nu}{dw} = \sum_{i=0}^2q_i(x_i)\frac{\partial f}{\partial x}(x_i,y_i) \frac{d\mu_X}{dx}(x_i)$ then uniquely determines $q_0(x_0)$, and \eqref{eqn: three point conditional probability} then determines $q_1(x_0)$ and $q_2(x_0)$, contradicting the assumption and completing the proof.

\end{proof}

\subsection{Uniqueness of the optimal coupling for discrete $\mu_X$ and $\mu_Y$}

In this section, we consider the case when $\mu_X \co \mu_Y \co \mu_Z$, 
where $\mu_X$ and $\mu_Y$ are discrete probability measures on $\mathbb{R}$, 
and $\mu_Z$ is an absolutely continuous probability measure on $\mathbb{R}$. 
Let $\mu_X$ and $\mu_Y$ be supported on countable sets $\{x_i\}$ and $\{y_j\}$, respectively. 
We consider a bounded continuous cost function $c(x,y,z)$ satisfying that for any fixed pairs $(x_i, y_j) \neq (x_k, y_\ell)$, 
the function $c(x_i, y_j, z) - c(x_k, y_\ell, z)$ intersects any given line at most countably many times.

By Theorem 5.2 of \cite{NutzStebeggTan20}, 
there is no duality gap between the primal and dual formulations of the multi-period MOT problem, 
and an optimal dual solution exists. 
For each $\pi \in \PiM(\mu_X, \mu_Y, \mu_Z)$, define:
\[
	I_{ij}^{\pi} \coloneqq \{z \in \tsp(\mu_Z) \mid (x_i, y_j, z) \in \tsp(\pi) \}.
\] 
Fix a dual optimizer $(u, v, w, g, h)$ for the dual problem \eqref{prob:Dual-multi-period-MOT}, where $u, v, g, h$ are functions on $\{x_i\}$ and $\{y_j\}$, while $w$ is a function on $\mathbb{R}$. Define:
\[
	I_{ij}^c \coloneqq \{z \in \tsp(\mu_Z) \mid u_i + v_j + w(z) + g_i(y_j - x_i) + h_{ij}(z - y_j) - c(x_i, y_j, z) = 0\}.
\]

\begin{lemma}\label{zAtomlessUnqSources}
 If $\pi$ is an optimal solution to the martingale optimal transport problem \eqref{prob:Primal-multi-period-MOT} with cost $c$, then for any distinct pairs $(i,j) \neq (k, \ell)$, we have $\mu_Z(I_{ij}^{\pi} \cap I_{k\ell}^{\pi}) = 0$.
\end{lemma}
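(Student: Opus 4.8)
The plan is to combine complementary slackness with the structural hypothesis on the cost, reducing everything to a statement about level sets of the dual potentials. By Theorem 5.2 of \cite{NutzStebeggTan20} there is no duality gap and the infimum in \eqref{prob:Primal-multi-period-MOT} is attained at $\pi$; hence $\pi$ is concentrated on the set where the dual inequality defining \eqref{prob:Dual-multi-period-MOT} is an equality. Restricting to the atom $\{x_i\}\times\{y_j\}$ of $\mu_X\times\mu_Y$, this says precisely that the $Z$-marginal $\rho_{ij}$ of $\pi$ restricted to $\{x_i\}\times\{y_j\}\times\R$ is concentrated on $I_{ij}^c$. Since $\mu_Z=\sum_{i,j}\rho_{ij}$ (a countable sum, as $\mu_X,\mu_Y$ are countably supported) and each $\rho_{ij}\le\mu_Z$ is absolutely continuous, I would conclude that, up to a $\mu_Z$-null set, $I_{ij}^\pi\subseteq I_{ij}^c$ for every $(i,j)$; if $w$ is taken continuous then $I_{ij}^c$ is closed and this inclusion is exact, since then $\tsp(\rho_{ij})\subseteq I_{ij}^c$.

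Granting this, it suffices to show $\mu_Z\big(I_{ij}^c\cap I_{k\ell}^c\big)=0$ whenever $(x_i,y_j)\neq(x_k,y_\ell)$. For $z$ in this intersection both tightness relations hold, and subtracting them cancels the common term $w(z)$, leaving
\[
c(x_i,y_j,z)-c(x_k,y_\ell,z)=(h_{ij}-h_{k\ell})\,z+\big(u_i-u_k+v_j-v_\ell+g_i(y_j-x_i)-g_k(y_\ell-x_k)-h_{ij}y_j+h_{k\ell}y_\ell\big).
\]
The right-hand side is a fixed affine function of $z$ (a ``line''), with coefficients depending only on the indices. Thus on $I_{ij}^c\cap I_{k\ell}^c$ the function $z\mapsto c(x_i,y_j,z)-c(x_k,y_\ell,z)$ coincides with this line, and by the standing assumption on $c$ this occurs for at most countably many $z$. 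Hence $I_{ij}^c\cap I_{k\ell}^c$ is at most countable.

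Finally, since $\mu_Z$ is absolutely continuous a countable set is $\mu_Z$-null, so $\mu_Z(I_{ij}^c\cap I_{k\ell}^c)=0$, and combining with the first step,
\[
\mu_Z\big(I_{ij}^\pi\cap I_{k\ell}^\pi\big)\le\mu_Z\big(I_{ij}^c\cap I_{k\ell}^c\big)=0,
\]
which is the assertion. The step I expect to be the main obstacle is the first one: complementary slackness is only a $\pi$-a.e.\ statement, whereas $I_{ij}^\pi$ is defined through the \emph{topological} support of $\pi$, so one must control the gap between $\tsp(\rho_{ij})$ and the essential ($\mu_Z$-)support of $\rho_{ij}$. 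This is where absolute continuity of $\mu_Z$ (or, alternatively, a suitable regularity property of the dual potential $w$ available from the duality theory in \cite{NutzStebeggTan20}) does the work; the remaining steps are elementary.
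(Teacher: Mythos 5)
Your proposal is correct and follows essentially the same route as the paper's proof: invoke dual attainment from Theorem 5.2 of \cite{NutzStebeggTan20}, use the pointwise equalities on the support to get $I_{ij}^{\pi}\cap I_{k\ell}^{\pi}\subset I_{ij}^c\cap I_{k\ell}^c$, subtract the two equalities to cancel $w(z)$ and reduce to the cost intersecting a line, then conclude via the countable-intersection hypothesis and absolute continuity of $\mu_Z$. Your explicit attention to the gap between the $\pi$-a.e.\ complementary slackness statement and the topological support is a point the paper glosses over, but it is a refinement of the same argument rather than a different one.
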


\begin{remark}
	In particular, if $x_i + y_j \neq 0$ for all $i,j$ and $c(x, y, z) = (x + y)z^2$, the lemma applies.
\end{remark}

\begin{proof}
	If $(x_i, y_j, z)$ and $(x_k, y_{\ell}, z)$ both belong to $\tsp(\pi)$, then $z$ belongs to the intersection $I_{ij}^{\pi} \cap I_{k\ell}^{\pi}$. 
	From the dual optimality conditions, we have:
	\begin{align}
		u_i + v_j + w(z) + g_i(y_j - x_i) + h_{ij}(z - y_j) &= c(x_i, y_j, z), \label{ijzDual}\\
		u_k + v_{\ell} + w(z) + g_k(y_{\ell} - x_k) + h_{k\ell}(z - y_{\ell}) &= c(x_k,y_{\ell}, z). \label{kellzDual}
	\end{align}
	Subtracting \eqref{ijzDual} from \eqref{kellzDual} and grouping terms independent of $z$, we obtain:
	\[
		D_{ij}^{k\ell} + (h_{k\ell} - h_{ij})z + c(x_i, y_j, z) - c(x_k,y_{\ell}, z) = 0.
	\]
	By assumption on the cost function, the term $c(x_i, y_j, z) - c(x_k, y_{\ell}, z)$ has only countably many intersections with any linear function in $z$. 
	Since $I_{ij}^{\pi} \cap I_{k\ell}^{\pi} \subset I_{ij}^c \cap I_{k\ell}^c$, 
	and $\mu_Z$ is absolutely continuous, we conclude that $\mu_Z(I_{ij}^{\pi} \cap I_{k\ell}^{\pi}) = 0$.
\end{proof}

With this, we can establish uniqueness of the optimal coupling.

\begin{theorem}\label{thm: uniqueness for discrete x and y}
	The optimal measure $\pi$ for the MOT problem \eqref{prob:Primal-multi-period-MOT} with cost $c$ is unique. 
\end{theorem}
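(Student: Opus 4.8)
The plan is to use Lemma \ref{zAtomlessUnqSources} to show that \emph{every} optimal coupling is concentrated on a graph over the $z$ variable, and then close the argument with the standard convexity/linearity trick: average two optimizers, observe the average is again an optimizer, and conclude from the graph structure that the two must coincide.

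First I would fix an optimal $\pi \in \PiM(\mu_X,\mu_Y,\mu_Z)$ and disintegrate it with respect to $z$. Since $\Proj_{XY}(\pi)$ is carried by the countable set $\{x_i\}\times\{y_j\}$, we may write $\pi = \mu_Z \otimes \eta_z$ with $\eta_z = \sum_{i,j} p_{ij}(z)\,\delta_{(x_i,y_j)}$ and $\sum_{i,j} p_{ij}(z) = 1$ for $\mu_Z$-a.e.\ $z$. Set $A_{ij} := \{z : p_{ij}(z) > 0\}$. A short measure-theoretic argument shows that, modulo $\mu_Z$-null sets, $A_{ij}$ is contained in the support (as a measure on $\mathbb{R}$) of $p_{ij}\cdot\mu_Z$, and that this support is in turn contained in $I_{ij}^{\pi}$ (using that the singletons $\{x_i\}$, $\{y_j\}$ are closed, so that $z_0 \in \tsp(p_{ij}\cdot\mu_Z)$ forces $(x_i,y_j,z_0)\in\tsp(\pi)$). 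Hence for $(i,j)\ne(k,\ell)$ we get $\mu_Z(A_{ij}\cap A_{k\ell}) \le \mu_Z(I_{ij}^{\pi}\cap I_{k\ell}^{\pi}) = 0$ by Lemma \ref{zAtomlessUnqSources}. Since the $A_{ij}$ cover $\mu_Z$-a.e.\ $z$ and are pairwise $\mu_Z$-disjoint, for $\mu_Z$-a.e.\ $z$ exactly one pair has $p_{ij}(z)>0$, forcing $p_{ij}(z)=1$ there. Equivalently, $\eta_z$ is a Dirac mass for $\mu_Z$-a.e.\ $z$, i.e.\ there is a Borel map $G : Z \to \{x_i\}\times\{y_j\}$ with $\pi = (G,\mathrm{id})_{\#}\mu_Z$.

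Next I would run the convexity argument. Suppose $\pi_0$ and $\pi_1$ are both optimal. The objective $\pi \mapsto \int c\, d\pi$ is linear and $\PiM(\mu_X,\mu_Y,\mu_Z)$ is convex (the marginal and martingale constraints are affine in $\pi$), so $\pi_{1/2} := \tfrac12(\pi_0 + \pi_1)$ is again an optimal coupling. By the previous paragraph each of $\pi_0,\pi_1,\pi_{1/2}$ is concentrated on the graph over $z$ of a map $G_0,G_1,G_{1/2}$. Disintegrating $\pi_{1/2} = \tfrac12(\pi_0+\pi_1)$ with respect to $z$ gives $\tfrac12(\delta_{G_0(z)} + \delta_{G_1(z)})$, and this coincides with the Dirac mass $\delta_{G_{1/2}(z)}$ for $\mu_Z$-a.e.\ $z$ only if $G_0(z) = G_1(z)$ for $\mu_Z$-a.e.\ $z$. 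Therefore $\pi_0 = \pi_1$, which is the asserted uniqueness.

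The main obstacle I anticipate is the first paragraph: carefully passing between the closed-support object $I_{ij}^{\pi}$ appearing in Lemma \ref{zAtomlessUnqSources} and the density-support sets $A_{ij}$ of the $z$-disintegration, so that the measure-zero conclusion of the lemma transfers to a genuine $\mu_Z$-almost-everywhere partition of $\tsp(\mu_Z)$ by the $A_{ij}$. Once the ``graph over $z$'' structure is in hand, the rest is routine. One should also verify that $\eta_z$ is indeed carried by the countable set $\{x_i\}\times\{y_j\}$ and that $z \mapsto G(z)$ may be taken Borel, but these present no real difficulty given the discreteness of $\mu_X$ and $\mu_Y$.
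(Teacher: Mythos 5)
Your proposal is correct and follows essentially the same route as the paper: deduce from Lemma \ref{zAtomlessUnqSources} that every optimizer is concentrated on a graph over $z$, then average two optimizers and use linearity of the objective and convexity of $\PiM$ to force the graphs to coincide. The only difference is that you carefully justify the passage from the topological-support sets $I_{ij}^{\pi}$ to an a.e.\ partition of the $z$-disintegration, a step the paper states without proof; your argument there is sound.
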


\begin{proof}
	By Lemma \ref{zAtomlessUnqSources}, each $z$ is associated with a unique pair $(i,j)$ for $\mu_Z$-almost all $z$. 
	Thus, any optimal solution $\pi$ is concentrated on the graph of a function; that is, 
	there exist functions $x: Z \to X$ and $y: Z \to Y$ such that $\tsp(\pi) \subset \{(x(z), y(z), z) \mid z \in \tsp(\mu_Z)\}$.

	Now, assume for contradiction that there exist two optimal measures $\pi^0, \pi^1 \in \PiM(\mu_X,\mu_Y,\mu_Z)$. 
	Since the objective function is linear in $\pi$, their convex combination $\pi^t = t \pi^0 + (1 - t) \pi^1$	is also an optimal solution fpr all $t \in (0,1)$. 
	However, since each optimal solution must be concentrated on a function graph, $\pi^t$ would be supported on two distinct graphs $\tsp(\pi^0) \cup \tsp(\pi^1)$.
	By Lemma \ref{zAtomlessUnqSources}, for $\mu_Z$-almost all $z$, 
	there is a unique pair $(x(z), y(z))$ such that $(x(z), y(z), z) \in \tsp(\pi)$. 
	This implies that the two graphs must coincide, meaning $\pi^0 = \pi^1$.
	Thus, the optimal coupling $\pi$ is unique.
\end{proof}

\section{First-order approximation of price bounds using real market data}

In this section, we build on Proposition~\ref{prop: derivatives of optimal cost}, Theorem~\ref{thm: optimizers of limiting three period problem}, and Remark~\ref{rem: linearization} to explore applications in the context of financial modeling.

We are interested in providing price bounds for a three-period, path-dependent derivative with payoff
\[
c(x,y,z) = c_1(x,y) + c_2(y,z) + c_3(x,z),
\]
under given marginals $\mu_X$, $\mu_Y$, and $\mu_Z$.

Computing exact pricing bounds for such cost functions is generally challenging, as little is known about the solution for such costs. 
Direct numerical approximation of the three-period MOT problem using the Sinkhorn or ODE method (as in \cite{hiew2024ordinary}) may be computationally expensive due to high-dimensional input data and the curse of dimensionality.

As in Section 3, we introduce a parameter $\ep$ to $c_3$, and define the function:
\[
P_l(\ep) = \inf_{\pi \in \Pi_M(\mu_X, \mu_Y, \mu_Z)} \int (c_1 + c_2 + \ep c_3) \, d\pi.
\]
As mentioned in Remark~\ref{rem: linearization}, the first-order approximation to $P_l(\ep)$ around 0 is given by:
\begin{align*}
	P_l(\ep) &\approx Q_l(\ep) \coloneqq P_l(0) + \ep P_l'(0)\\
	&= \int c_1 \, d\pi^{XY} + \int c_2 \, d\pi^{YZ} + \ep \int c_3 \, d\pi,
\end{align*}
where $\pi^{XY}$ and $\pi^{YZ}$ are the optimal two-period martingale couplings for the MOT problem with costs $c_1$ and $c_2$, respectively. The measure $\pi$ is a joint distribution consistent with $\pi^{XY}$ and $\pi^{YZ}$, and its conditional probability $\kappa^{XZ}_Y(y,dxdz)$ given $y$ solves the variant of the MOT problem \ref{eqn: fixed barycenter problem} for the cost $c_3$ with the conditional probabilities of $\pi^{XY}$ and $\pi^{YZ}$ given $y$ being the marginals (recall Proposition \ref{prop: derivatives of optimal cost} and Theorem \ref{thm: optimizers of limiting three period problem}).

We denote the corresponding upper bound for the true cost and its first-order approximation by $P_u(\ep) = \sup_{\pi \in \Pi_M(\mu_X, \mu_Y, \mu_Z)} \int (c_1 + c_2 + \ep c_3) \, d\pi.$ and $Q_u(\ep)$, respectively.

The main advantage of this approximation is that each of the three summands in $Q_l(\ep)$ and $Q_u(\ep)$ is computed using a two-period MOT problem or the variant \eqref{eqn: fixed barycenter problem} of a two-period MOT problem. 
For some of these problems, the optimizer is well understood; for martingale Spence-Mirrlees type costs, for example, the optimizer is given by the left-monotone coupling as a result of \cite{bj}, or our Theorem~\ref{thm: structure of optimizer in three period limit problem}. Even for more general costs, solving three two-period MOT problems is significantly more efficient than solving the full three-period MOT problem.\footnote{If we discretize each of the measures $\mu_X,\mu_Y$ and $\mu_z$ with $N$ points, the number of unknowns in the three period MOT problem used to compute $P_l$ is $N^3$, while it is $N^2$ in each of the two period MOT problems needed to determine $Q_l$, so the total number in the three problems needed is $3N^2$.}

We illustrate this with two numerical examples involving time-dependent derivatives written on Amazon stock. We extract option-implied risk-neutral distributions $\mu_X, \mu_Y$ and $\mu_Z$ from option prices observed on November 23\textsuperscript{rd}, 2022, using maturities of December 16\textsuperscript{th}, 2022, January 20\textsuperscript{th}, 2023, and February 17\textsuperscript{th}, 2023, following the method of \cite{BreedenLitzenberger}.

Real world prices are often estimated by constructing particular martingales $\pi_m \in \Pi^M(\mu_X, \mu_Y, \mu_Z)$, using particular modeling assumptions. The corresponding price $P_m(\epsilon) :=\int (c_1 + c_2 + \ep c_3) \, d\pi_m$ then clearly depends on the particular assumptions used to construct $\pi_m$, but we must always have $P_l(\epsilon) \leq P_m(\epsilon) \leq P_u(\epsilon)$.  We compute prices using one such modelling method here, and compare the resulting price with our first order approximations $Q_l$ and $Q_u$ of  $P_l$ and $P_u$, respectively.  Specifically, we use a tree-like construction. Trees are a common way to construct martingales in financial modeling, in which each timestep is constructed independently by allowing each value to jump either up or down by an equal increment with equal probability (see, for example, \cite{hull2016options}). Our precise construction here is slightly different, as we must preserve the single time marginals. Our idea is construct each transition probability between the marginals with
 small local movements of the underlying asset, as consecutive times in our data are quite close together. To ensure the martingale and marginal constraints are respected, we formulate a linear program to find a martingale coupling minimizing the deviation $c(x, y) = |y - x|^p$ (for $p=1,2,3$) over each time step.\footnote{In fact, this model amounts to finding the single timestep couplings $\pi^{XY}$ and $\pi^{YZ}$ by solving the two period MOT problem \eqref{prob:Primal-multi-period-MOT} between $\mu_X$ and $\mu_Y$ and $\mu_Y$ and $\mu_Z$, respectively, with costs $|x-y|^p$ and $|y-z|^p$, respectively.  The model martingale, $\pi_m$ in the notation of this section, is then constructed as the Markovian glueing described in Remark \ref{rem: markovian glueing}.} 
 In the case of $p = 2$, any martingale measure will optimize the cost and the linear programming solver will return an extreme point of the set of martingale couplings.
\subsection{Example: Third moment of the sum}

We consider the following cost function:
\[
\bar c(x, y, z) = (x + y + z)^3 = x^3 + y^3 + z^3 + 3(x^2y + x^2z + y^2x + y^2z + z^2x + z^2y) +6xyz
\]

Note that the payoff of a three period Asian option depends on the risk neutral distribution of the sum $x+y+z$; pricing these options therefore depends on the properties of this distribution. Its first and second moments are uniquely determined by the marginals and martingale condition, respectively, and so the expected value of $\bar c$ represents the first non-fixed moment, and therefore has an important impact on the pricing of Asian options.  The third moment is closely related to the option-implied skewness. In financial markets, skewness is used to quantify tail risk and is frequently referenced in practice as a measure of asymmetry in the return distribution \cite{CBOE2011}. It has also been used as a proxy for physical skewness in forecasting expected returns \cite{ConardDittmarGhysels13}, \cite{JackwerthRubinstein96}.  Our method, detailed below, allows us to approximate model independent bounds on the third moment in closed form.

Under a risk-neutral pricing framework where the pricing kernel is always a martingale measure, the expectation of the cost function $\bar c$ under any martingale measure $\pi$ with marginals $\mu_X$, $\mu_Y$, and $\mu_Z$ simplifies to:
\begin{align*}
\int \bar c(x, y, z) \, d\pi &= 7\int x^3 \, d\mu_X + 4\int y^3 \, d\mu_Y + \int z^3 \, d\mu_Z\\
&\qquad \qquad + 9\int xy^2 \, d\pi^{XY} + 3\int yz^2 \, d\pi^{YZ} + 3\int xz^2 \, d\pi^{XZ},
\end{align*}
where $\pi^{XY}$, $\pi^{YZ}$, and $\pi^{XZ}$ are the bivariate marginals of $\pi$.  As the first three terms above are completely determined by the marginals and are therefore equal for all $\pi \in \PiM(\mu_X,\mu_Y,\mu_Z)$, we neglect them for simplicity and consider only the cross terms $xy^2, yz^2$ and $xz^2$.
We therefore define the following cost function:
\[
c(x, y, z, \ep) = c_1(x, y) + c_2(y, z) + \ep c_3(x, z),
\]
where
\[
c_1(x, y) = 9xy^2, \quad
c_2(y, z) =  3yz^2, \quad
c_3(x, z) = 3xz^2.
\]
We recover the original problem $\bar c$  by setting $\ep = 1$.

Since $c_1(x, y)$ and $c_2(y, z)$ are both Spence–Mirrlees type costs, the corresponding two-period optimizers $\pi^{XY}$ and $\pi^{YZ}$ are left-monotone martingale couplings, which can be constructed explicitly \cite{bauerle2019martingale}, \cite{bj}. Theorem \ref{thm: structure of optimizer in three period limit problem} implies that the optimal conditional probabilities $\kappa^{XZ}_Y(y, dxdz)$ of $\pi^{XZ}$ given $y$ are also left monotone and can be constructed in closed form as well. 
Now, when $\ep = 0$, the cost reduces to $c_1 + c_2$, and by Proposition \ref{prop:ConsecutiveOptimality}, the twofold marginals of any optimal plan must agree with the specified marginals $\pi^{XY}$ and $\pi^{YZ}$. Thus, the values of $P_l(0) = Q_l(0)$ and $P_u(0) = Q_u(0)$ are computable explicitly, as are $P_l'(0)$ and $P_u'(0)$, using Proposition \ref{prop: derivatives of optimal cost}.   At $\ep = 1$, we then use the first-order approximation $Q_l(1)$ and $Q_u(1)$ to estimate the price bounds $P_l(1)$ and $P_u(1)$ of the full cost function $\bar c$.

Table \ref{tab:asian_option} summarizes the price bounds  (for the cross terms) computed via the first-order approximation ($Q_l$, $Q_u$) and the prices returned by the tree-like model using deviation costs $|y - x|^p$ for $p = 1, 2, 3$. We observe that all values computed using the tree-like method lie within the interval $Q_l(1)$ and $Q_u(1)$, demonstrating that the first-order approximation provides reasonable bounds.  
 As this is a relatively small scale problem, we can also solve the 3 period MOT problem numerically by linear programming. The true lower and upper bounds  are $P_l(1)= 14,314,844$ and $P_u(1)=14,323,889$,  which are extremely close to our approximate bounds $Q_l(1)=14,314,867$ and $Q_u(1) =14,323,889$, respectively.   

This information is represented graphically in Figures \ref{fig:skew_bounds_zoom_0} and \ref{fig:skew_bounds_zoom_1}. We note that, as is fairly common in derivative pricing, the prices coming from all models are fairly close together (since $P_l(\epsilon)$ is quite close to $P_u(\epsilon)$, and all model curves $P_m(\epsilon)$ must lie between them), so it would be difficult to distinguish different curves for the full range of $\epsilon$ visually; we therefore present only zoomed in views of the graphs for values of $\epsilon$ near $0$ and $1$.





\begin{table}[h!]
	\centering
	\begin{tabular}{|c|c|c|c|c|c|}
		\hline
		$\varepsilon$ & $Q_l(\varepsilon)$ & $p = 1$ & $p = 2$ & $p = 3$ & $Q_u(\varepsilon)$ \\
		\hline
		0 & 11,448,994 & 11,454,263 & 11,450,363 & 11,451,641 & 11,455,414 \\
		\hline
		1 & 14,314,867 & 14,322,158 & 14,316,863 & 14,318,572 & 14,323,889 \\
		\hline
	\end{tabular}
	\caption{First-order approximation vs. tree-like method for the third moment of the sum (sum of cross terms only.}
	\label{tab:asian_option}
\end{table}


\begin{figure}[h!]
    \centering
    \includegraphics[width=\textwidth]{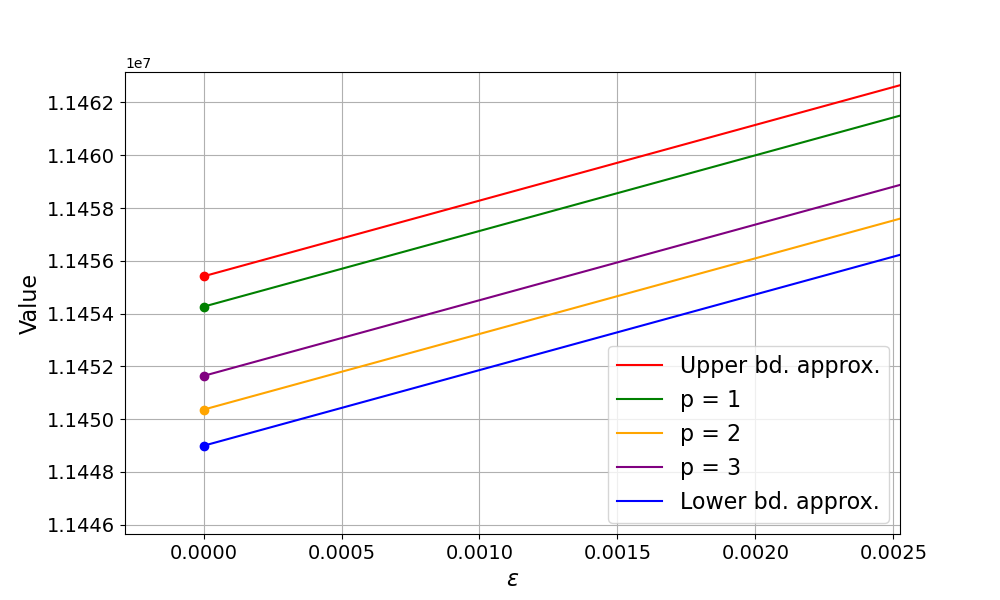}
    \caption{Zoomed view around $\ep=0$ for first-order approximation vs. tree-like method for the third moment of a sum (sum of cross term only).}
    \label{fig:skew_bounds_zoom_0}
\end{figure}

\begin{figure}[h!]
    \centering
    \includegraphics[width=\textwidth]{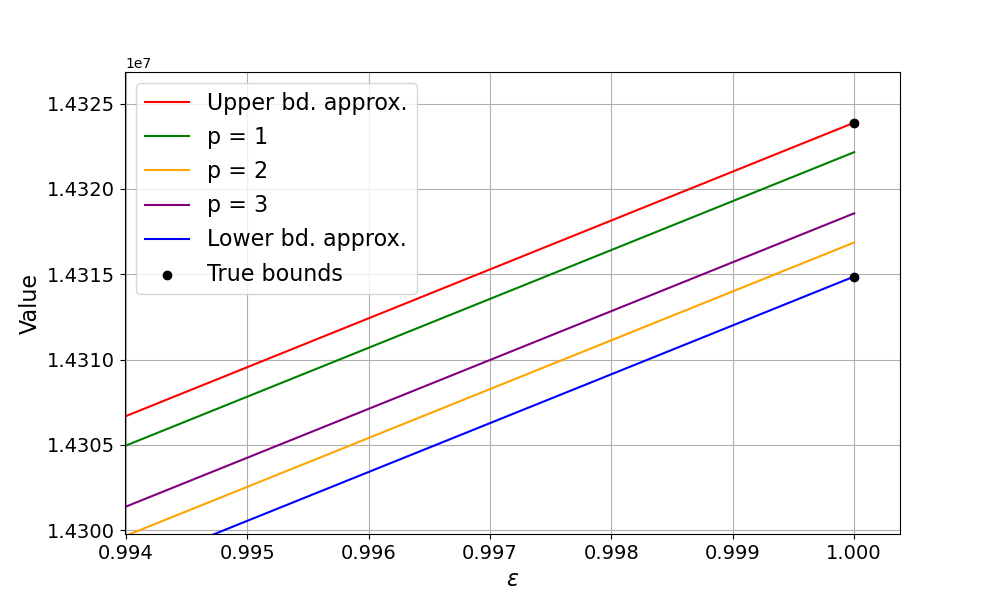}
    \caption{Zoomed view around $\ep=1$ for first-order approximation vs. tree-like method for the third moment of a sum (sum of cross term only).}
    \label{fig:skew_bounds_zoom_1}
\end{figure}

\subsection{Example: Basket of straddle options}

We consider a basket of forward start straddle options, combining payoffs over all pairwise periods $(x,y)$, $(y,z)$, and $(x,z)$. The cost function is given by $c(x,y,z) = c_1(x,y) + c_2(y,z) + c_3(x,z)$ where 
\[
c_1(x,y) = |y - x|, \quad c_2(y,z) = |z - y|, \quad c_3(x,z) = |z - x|.
\]
Straddle options are widely used in financial markets \cite{de2011exotic}, and have been studied in the context of MOT \cite{hobson2012robust}, \cite{hobson2015robust}.

Unlike the example in the preceding subsection, the cost functions $c_1, c_2$ and $c_3$ here are not of martingale Spence–Mirrlees type, and we cannot rely on left-monotonicity results to construct the optimal coupling. Instead, we compute the couplings $\pi^{XY}$, $\pi^{YZ}$, and $\pi^{XZ}$ in Theorem \ref{thm: optimizers of limiting three period problem}, needed to compute $Q_l$ and $Q_u$, using a linear programming solver; this amounts to numerically solving 3 two-dimensional linear programs, which is much more tractable than the three-dimensional linear program required to find the exact values $P_l$ and $P_u$. 

Table \ref{tab:straddle_option} summarizes the first-order approximations and the prices obtained via the tree-like method. We observe that, for $\ep = 1$, all three values from the tree-like method with $p = 1, 2, 3$ lie within the first-order approximation bounds $Q_l(1)$ and $Q_u(1)$, supporting the idea that the first-order expansion provides a good approximation for the price bound.

Figure \ref{fig:straddle_bounds} shows that the computed prices using the tree-like method lie strictly between the lower and upper bounds across the entire range $\ep \in [0,1]$.  In fact, the $p=1$ curve stays very close to the approximate lower bound curve, while the $p=3$ stays close to the approximate upper bound curve; it is difficult to distinguish them visually.  The fact that $Q_l(0) =P_m(0)$ for the $p=1$ model is entirely expected (as both are computed by solving the same MOT problems).  The fact that these curves remain close for other values of $\epsilon$, as well as the fact that the $p=3$ model curve is close to the approximate upper bound $Q_u(\epsilon)$ is more surprising.

 For this small scale problem, we can also solve the 3 period MOT problem directly by linear programming, although it is less efficient than computing our approximate bounds. The true lower and upper bounds are $P_l(1)=14.8651$ and $P_u(1)=20.6914$, which are quite close to our approximate values, $Q_l(1)=15.01453$ and $Q_u(1)=20.2759$, respectively.

\begin{table}[h!]
	\centering
	\begin{tabular}{|c|c|c|c|c|c|}
		\hline
		$\ep$ & $Q_l(\ep)$ & $p = 1$ & $p = 2$ & $p = 3$ & $Q_u(\ep)$ \\
		\hline
		0 & 8.5036 & 8.5036 & 9.3460 & 12.2669 & 12.2676 \\
		\hline
		1 & 15.01453 & 15.0957 & 16.5204 & 20.1843 & 20.2759 \\
		\hline
	\end{tabular}
	\caption{Comparison of first-order approximation and tree-like method for the straddle cost.}
	\label{tab:straddle_option}
\end{table}

\begin{figure}[h!]
    \centering
    \includegraphics[width=\textwidth]{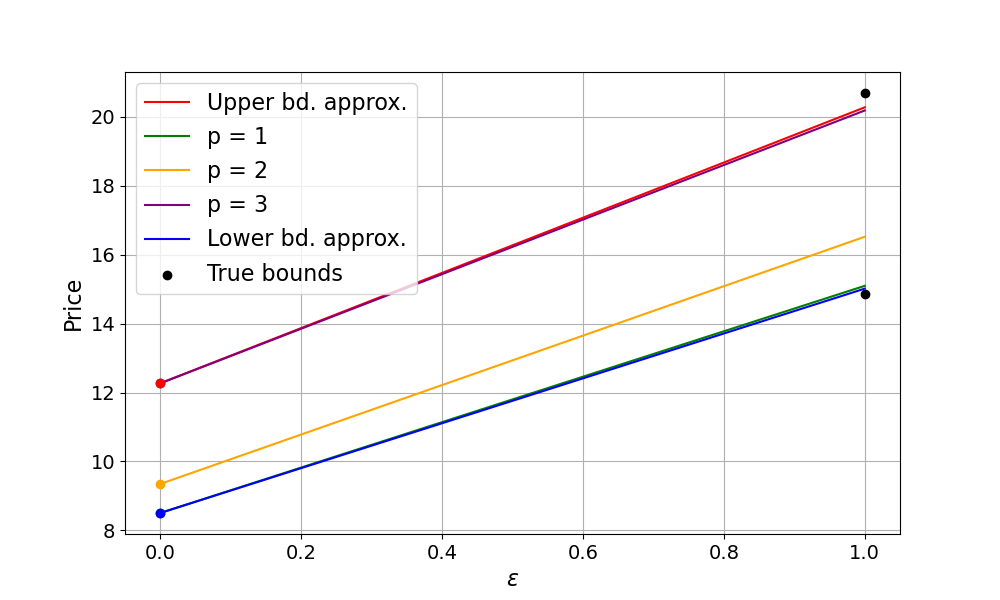}
    \caption{Comparison of first-order approximation and tree-like method for the straddle cost over $\ep \in [0,1]$.}
    \label{fig:straddle_bounds}
\end{figure}

\textbf{First author biography:}\\
 Brendan Pass is a faculty member in the Department of Mathematical and Statistical Sciences at the University of Alberta (Edmonton, Alberta, Canada). He works primarily on optimal transport, and is in particular an expert on multi-marginal problems.   Pass is one of the founders of the Kantorovich Initiative, a nascent organization focused on interdisciplinary optimal transport research and supported by the Pacific Institute for the Mathematical Sciences (PIMS) and the US National Science Foundation (NSF). Pass' 2011 PhD thesis at the University of Toronto garnered him the 2012 Cecil Graham Doctoral Dissertation Award from the Canadian Applied and Industrial Mathematics Society (CAIMS), and he was the 2021 recipient of the  CAIMS - PIMS Early Career Award, in recognition of his contributions to optimal transport theory.

\bibliographystyle{plain}
\bibliography{biblio}

\end{document}